\documentclass{amsart}

\usepackage{mathptmx}
\usepackage{amsmath}
\usepackage{amssymb}
\usepackage{amsfonts}
\usepackage{amsxtra}

\usepackage{enumitem}
\setenumerate{label={\rm (\alph{*})}}

\usepackage[active]{srcltx}

\usepackage[color,hyperref]{paper_diening}

\usepackage{graphicx}

\newtheorem{theorem}{Theorem}[section]
\newtheorem{lemma}[theorem]{Lemma}

\newtheorem{corollary}[theorem]{Corollary}
\newtheorem{assumption}[theorem]{Assumption}

\newtheorem{definition}[theorem]{Definition}

\newtheorem{remark}[theorem]{Remark}

\numberwithin{equation}{section}

\newcommand{\meantmp}[2]{#1\langle{#2}#1\rangle}
\newcommand{\mean}[1]{\meantmp{}{#1}}

\newcommand{\ticho}[1]{}

\newcommand{\setS}{\setR^{2 \times 2}_{\sym}}

\newcommand{\pbar}{{\overline p}}
\newcommand{\qbar}{{\overline q}}
\newcommand{\RNn}{\mathbb R^{2\times 2}}
\def\seminorm{\norm}
\newcommand{\camp}{\mathcal L}
\renewcommand{\setBMO}{\rm{BMO}}

\newcommand{\Bog}{\ensuremath{\text{\rm Bog}}}

\begin{document}

\title[Campanato estimates for the generalized Stokes System]{Campanato estimates for \\the generalized Stokes System}

\author{L. Diening \and P. \Kaplicky{} \and S. Schwarzacher}

\date{Reviewed: ??}
\subjclass[2010]{35B65; 35Q35; 76D03}


\thanks{L. Diening and S. Schwarzacher are supported by Grant 50755385
  of DAAD. P. Kaplick\'y is supported by Grant MEB101101 of MEYS of
  the Czech Republic, and partially also by Grant GA\v CR 201/09/0917
  of the Czech Science Foundation.  }

\maketitle 

\begin{abstract}
  We study interior regularity of solutions of a generalized
  stationary Stokes problem in the plane. The main, elliptic part of
  the problem is given in the form $\divergence(\bfA(\bfD\bfu))$,
  where $\bfD$ is the symmetric part of the gradient. The model case
  is $\bfA(\bfD\bfu)=(\kappa+\abs{\bfD\bfu})^{p-2}\bfD\bfu$. We show
  optimal $\setBMO$ and Campanato estimates for $\bfA(\bfD\bfu)$. Some
  corollaries for the generalized stationary Navier-Stokes system and
  for its evolutionary variant are also mentioned.

\end{abstract}


\section{Introduction}
\label{sec:introduction}

Let $\Omega\subset\setR^2$ be a domain.  In this article we study
properties of the local weak solution $\bfu \in W^{1, \phi}(\Omega)$
and $\pi \in L^{\phi^*}(\Omega)$ of the generalized Stokes
problem\footnote{We denote by $\bfD\bfu$ the symmetric part of the
  gradient of $\bfu$, i.e. $\bfD\bfu=(\nabla\bfu+(\nabla\bfu)^T)/2$.}
\begin{align}
  \begin{alignedat}{2}
    \label{eq:sysinhom}
    -\divergence \bfA(\bfD \bfu) + \nabla \pi &=
    -\divergence \bfG &&\quad\text{in $\Omega$},
    \\
    \divergence \bfu&=0 &&\quad\text{in $\Omega$}
  \end{alignedat}
\end{align}
for given $\bfG \,:\, \Omega \to \setR^{2 \times 2}_\sym$. Here $\bfu$
stands for the velocity of a fluid and $\pi$ for its pressure.  We do
not need boundary condition, since our results are local. The model
case is $\bfA(\bfQ) = \nu(\kappa + \abs{\bfQ})^{p-2} \bfQ$
corresponding to power law fluids with $\nu > 0$, $\kappa \geq 0$,
$1<p < \infty$ and $\bfQ$~symmetric. But we also allow more general
growth conditions, which include for example Carreau type fluids
$\bfA(\bfQ) = \mu_\infty \bfQ + \nu(\kappa + \abs{\bfQ})^{p-2} \bfQ$
with $\mu_\infty\geq 0$ (see Subsection~\ref{ssec:A}).  In this
article we are interested in the qualitative properties of $\bfA(\bfD
\bfu)$ and~$\pi$ in terms of $\bfG$. The divergence form of the
right-hand side is only for convenience of the formulation of the
result, since every $\bff$ can be written as $-\divergence \bfG$ with
$\bfG$ symmetric, see Remark~\ref{rem:rhsl2}.

System \eqref{eq:sysinhom} originates in fluid mechanics. It is a
simplified stationary variant of the system
\begin{align}\label{eq:gnse-full}
  \bfu_t-\divergence\bfA(\bfD\bfu)+[\nabla\bfu]\bfu+\nabla\pi=-\divergence
  \bfG,\quad \divergence\bfu=0,
\end{align}
where $\bfu$ stands for a velocity of a fluid and $\pi$ for its
pressure. The extra stress tensor $\bfA$ determines properties of
the fluid and must be given by a constitutive law. If $\bfA(\bfQ) =
2\nu \bfQ$ with constant viscosity $\nu > 0$, then \eqref{eq:gnse-full}
is the famous Navier-Stokes system, which describes the flow of a
Newtonian fluids.  In the case of Non-Newtonian fluids however, the
viscosity is not constant but may depend non-linearly on $\bfD \bfu$.
The power law fluids and the Carreau type fluids are such examples,
which are widely used among engineers.  For a more detailed discussion
on the connection with mathematical modeling see e.g. \cite{MR2182831,
  MR1268237}. The existence theory for such type of fluids was
initiated by Ladyzhenskaya~\cite{MR0226907, 0184.52603} and
Lions~\cite{0189.40603}.


The main result of the article are the following Campanato type
estimates for the local weak solutions of~\eqref{eq:sysinhom}.
\begin{theorem}
  \label{thm:main}
  There is an $\alpha>0$ such that for all $\beta\in[0,\alpha)$ 
 a~$C>0$ exists such that for every ball $B$ with $2B \subset \Omega$
  \begin{align*}
    \seminorm{\bfA(\bfD\bfu)}_{\camp^{1,2+\beta}(B)} +
    \seminorm{\pi}_{\camp^{1,2+\beta}(B)} \leq C\,\bigg(
    \seminorm{\bfG}_{\camp^{1,2+\beta}(2B)}+
    R^{-\beta}\dashint_{2B}\abs{\bfA(\bfD\bfu)-\mean{\bfA(\bfD\bfu)}_{2B}}dx
    \bigg).
  \end{align*}
  In particular, $\bfG\in \camp^{1,2+\beta}(2B)$ implies
  $\bfA(\bfD\bfu), \pi\in \camp^{1,2+\beta}(B)$.
\end{theorem}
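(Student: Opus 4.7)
The natural strategy is a Campanato iteration anchored on comparison with a homogeneous Stokes problem, following the template familiar from $p$-Laplacian theory. Fix a ball $B_r \subset 2B$ and let $\bfh$ solve the companion problem
\[
 -\divergence \bfA(\bfD\bfh)+\nabla\pi_h = 0, \qquad \divergence\bfh = 0 \quad\text{in } B_r,
\]
with $\bfh-\bfu$ extending by zero on the divergence-free subspace. The proof rests on (i) a comparison estimate between $\bfu$ and $\bfh$, (ii) an excess-decay estimate for the homogeneous solution, and (iii) a standard iteration combining them.

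For the comparison step I would test the difference of~\eqref{eq:sysinhom} and the homogeneous system against $\bfu-\bfh$ and exploit the monotonicity of $\bfA$ through its shifted $N$-function. Using a Bogovskii-type corrector to restore the divergence-free constraint on the test function built from $\bfG-\mean{\bfG}_{B_r}$, one obtains
\[
 \dashint_{B_r} |\vdu-\vdh|^2\,dx \lesssim \dashint_{B_r} |\bfG-\mean{\bfG}_{B_r}|^2\,dx,
\]
and the bi-Lipschitz equivalence between $|\bfA(\bfP)-\bfA(\bfQ)|$ and $|\bfV(\bfP)-\bfV(\bfQ)|$ (in the sense of shifted $N$-functions) transfers this into the same control on $\bfA(\bfD\bfu)-\bfA(\bfD\bfh)$.

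The main obstacle is the excess-decay estimate for the homogeneous problem: there is some $\alpha\in(0,1)$, depending only on the ellipticity/growth data, such that for every $\rho\le r$
\[
 \dashint_{B_\rho} |\bfA(\bfD\bfh)-\mean{\bfA(\bfD\bfh)}_{B_\rho}|^2\,dx \lesssim \Big(\frac{\rho}{r}\Big)^{2\alpha} \dashint_{B_r} |\bfA(\bfD\bfh)-\mean{\bfA(\bfD\bfh)}_{B_r}|^2\,dx.
\]
This encodes the interior $C^{0,\alpha}$-regularity of $\vdh$ in two dimensions (in the spirit of the Kaplick\'y--M\'alek--Star\'a / Diening--Kaplick\'y works); it is precisely the planar setting that guarantees a positive $\alpha$. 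I expect this to appear as a preceding lemma and to be the most delicate nonlinear input of the paper.

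Combining the two ingredients via the triangle inequality and the cheap bound $|\mean{f}_{B_\rho}-\mean{f}_{B_r}|^2 \lesssim (r/\rho)^2\dashint_{B_r} |f-\mean{f}_{B_r}|^2$, I obtain, writing $E(\rho):=\dashint_{B_\rho} |\bfA(\bfD\bfu)-\mean{\bfA(\bfD\bfu)}_{B_\rho}|^2\,dx$,
\[
 E(\rho) \lesssim \Big(\frac{\rho}{r}\Big)^{2\alpha} E(r) + \Big(\frac{r}{\rho}\Big)^{2} \dashint_{B_r} |\bfG-\mean{\bfG}_{B_r}|^2\,dx.
\]
A standard Campanato-type iteration lemma then converts this mixed decay, for every $\beta<\alpha$, into the bound claimed for $\bfA(\bfD\bfu)$. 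For the pressure, I would recover $\pi-\mean{\pi}_B$ by testing \eqref{eq:sysinhom} against a Bogovskii-type preimage of $\pi-\mean{\pi}_B$ itself, which transfers the Campanato control from $\bfA(\bfD\bfu)-\bfG$ to $\pi$ at the same exponent and completes the proof.
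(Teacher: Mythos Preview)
Your global architecture---comparison with the homogeneous problem, excess decay for $\bfh$, Campanato iteration, and recovery of $\pi$ via a Bogovski\u\i{} preimage---is exactly the skeleton the paper uses (see Section~\ref{sec:gehring}, where the three ingredients are named explicitly, and the pressure step in the proof of Theorem~\ref{thm:BMOomega}). So the strategy is right.

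The gap is in the nonlinear bookkeeping. Your comparison inequality
\[
\dashint_{B_r}\abs{\bfV(\bfD\bfu)-\bfV(\bfD\bfh)}^2\,dx\;\lesssim\;\dashint_{B_r}\abs{\bfG-\mean{\bfG}_{B_r}}^2\,dx
\]
does not follow from testing with $\bfu-\bfh$ when $\phi$ is not quadratic: Young's inequality with $\phi_a$ gives control by $(\phi^*)_a(\abs{\bfG-\mean{\bfG}})$, with a shift $a$ tied to the current scale (in the paper $a=\abs{\mean{\bfA(\bfD\bfu)}_{2B}}$), not by $\abs{\bfG-\mean{\bfG}}^2$. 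Likewise, there is no bi-Lipschitz equivalence between $\abs{\bfA(\bfP)-\bfA(\bfQ)}$ and $\abs{\bfV(\bfP)-\bfV(\bfQ)}$; the correct relations (Lemma~\ref{lem:hammer}) are $\abs{\bfV(\bfP)-\bfV(\bfQ)}^2\sim\phi_{\abs{\bfQ}}(\abs{\bfP-\bfQ})$ and $\abs{\bfA(\bfP)-\bfA(\bfQ)}\sim(\phi_{\abs{\bfQ}})'(\abs{\bfP-\bfQ})$, which are \emph{nonlinearly} linked. Because the shift changes from ball to ball, the simple linear Campanato recursion you wrote for $E(\rho)$ does not close as stated.

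The paper handles this by adding a third ingredient you omit: a reverse H\"older inequality for the oscillation of $\bfV(\bfD\bfu)$ (Lemma~\ref{lem:VPrev}/Corollary~\ref{cor:VPL1}), proved with a divergence-free cutoff via Bogovski\u\i{}, Sobolev--Poincar\'e in Orlicz spaces, and Korn's inequality. This lets one pass from the $L^1$ oscillation of $\bfA(\bfD\bfu)$ appearing in the claim to the $L^2$ quantity $\abs{\bfV(\bfD\bfu)-\bfV(\bfP)}^2$, and is what makes the nonlinear iteration (carried out in~\cite{bmophi} and cited verbatim here) actually close. Your pressure argument, on the other hand, matches the paper's.
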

The spaces $\camp^{1,2+\beta}(B)$ are the Campanato spaces, see
Subsection~\ref{ssec:notation}. Our main theorem in particular
includes the $\setBMO$-case (bounded mean oscillation), since $\setBMO
= \camp^{1,2}$. Theorem~\ref{thm:main} is a consequence of the
refined $\setBMO_\omega$-estimates of Theorem~\ref{thm:BMOomega},
which also include the case $\setVMO$ (vanishing mean oscillation).
The upper bound~$\alpha$ is given by the maximal (local) regularity of
the homogeneous generalized Stokes system. Our estimates hold up to
this regularity exponent. Due to the Campanato characterization of
H{\"o}lder spaces $C^{0,\alpha}$ our results can also be expressed in
terms of H{\"o}lder spaces.

Our result is an extension of the results in~\cite{bmophi} to the
context of Non-Newtonian fluids. In~\cite{bmophi} we studied the problem
\begin{align}
  \label{eq:oldbmo}
  -\divergence(\bfA(\nabla \bfu)) &= -\divergence \bfG
\end{align}
for $\Omega \subset \setR^n$, $n \geq 1$, with similar growth
conditions on~$\bfA$ but $\bfD \bfu$ replaced by the full
gradient~$\nabla \bfu$. The model case is $\bfA(\nabla \bfu) =
\abs{\nabla \bfu}^{p-2} \nabla \bfu$ with $1<p<\infty$.
In~\cite{bmophi} we proved Theorem~\ref{thm:main} for weak
solutions of~\eqref{eq:oldbmo}. The case $p \geq 2$, has been studied
first in~\cite{DiBMan93}. 

Theorem~\ref{thm:main} is the limit case of the nonlinear
\Calderon{}-Zygmund theory, which was initiated by
\cite{Iwa1982,Iwa1983}. Iwaniec proved that $\bfG \in L^s$ with $s
\geq p'$ implies $\bfA(\nabla \bfu) \in L^s$, where $p' =
\frac{p}{p-1}$. See also~\cite{DiBMan93} for related works. In the
context of fluids the corresponding result was obtained
in~\cite{DieKap12}. However, the limiting regularity
of~\eqref{eq:sysinhom} for $\bfG=0$ restricts the transfer of
integrability to the range $s \in [p',\frac{n}{n-2}p']$ for $n \geq 3$
and $s \in [p',\infty)$ for $n=2$.

The reduced regularity for~\eqref{eq:sysinhom} with~$\bfG=0$ is
also the reason, why we can only treat the planar case $n=2$ in this
paper.  The crucial ingredient for Theorem~\ref{thm:main} are the
decay estimates for the homogeneous case~$\bfG=0$ in terms of the
gradients.  In this paper we are able to prove such decay estimates in
the planar case~$n=2$, see Theorem~\ref{thm:decay}. If such estimates
can be proven for $n\geq 3$, then Theorem~\ref{thm:main} would
directly generalize to this situation. Unfortunately, this is an open
problem, even in the absence of the pressure.

Theorem~\ref{thm:main} can be used to improve the known regularity
results for the stationary problem with convective term $[\nabla \bfu]
\bfu$, see Section~\ref{sec:NSpara}, and for the instationary
problem~\eqref{eq:gnse-full}, see Section~\ref{sec:NSpara}. The first
$C^{1,\alpha}$-regularity for planar flows were obtained in the series
of the articles~\cite{0946.76006,0978.35046,MR1727451} under various
boundary conditions under the restriction $\kappa > 0$. See also \cite{shil2000, abf2005}. The stationary
degenerate case $\kappa \geq 0$ was treated in~\cite{wolf2007} for
$1<p \leq 2$. To our knowledge the only result for $n\geq 2$ is the
one obtained in~\cite{MR2461247} with $\kappa>0$ and $1<p\leq 2$ and
small data and zero boundary values. Because of the zero boundary
values (combined with the small data), we are not able to use this
result for the higher regularity of the case~$\bfG=0$.

Note that our result is optimal with respect to the regularity
of~$\bfG$. All other planar results mentioned above need much stronger
assumptions on the regularity of~$\bfG$. This is one of the advantages
of the non-linear \Calderon{}-Zygmund theory. This is the basis for
our improved results in Section~\ref{sec:NS} and
Section~\ref{sec:NSpara} for the system including the convective term.
It is based on the fact, that the convective term can be written as
$\divergence(\bfu \otimes \bfu)$ using $\divergence \bfu=0$ and
therefore can be treated as a force term~$\divergence \bfG$.

\section{Notation, basic definitions and auxiliary results}
\label{sec:notat-basic-prop}
\par 
\subsection{Notation}
\label{ssec:notation}

By $B$ we will always denote a ball in $\setR^2$. We write $B
\compactsubset \Omega$ if the closure of $B$ is contained in $\Omega$.
Let $\abs{B}$ denote the volume of $B$.  Vector valued mappings are
denoted by bold letters, e.g. $\bfu$, while single valued functions
with regular letters, e.g. $\eta$.  For $\bff \in L^1_\loc(\setR^2)$
we define component-wise
\begin{align*}
  \mean{\bff}_B=\dashint_B \bff(x)\,dx := \frac{1}{\abs{B}} \int_B
  \bff(x)\,dx,\mbox{ and } M^\sharp_B
  \bff=\dashint_B\abs{\bff-\mean{\bff}_B}dx,\quad (M^\sharp
  \bff)(x)=\sup_{B\ni x}M^\sharp_B \bff.
\end{align*}
The space $\setBMO$ of function of bounded mean oscillations is defined via the
seminorm (for $\Omega$ open)
\begin{align*}
  \norm{\bff}_{\setBMO(\Omega)}:=\sup_{B\subset\Omega}
  \dashint_B\abs{\bff-\mean{\bff}_B}dx = \sup_{B\subset\Omega}
  M_B^\sharp \bff;
\end{align*}
saying that $\bff\in \setBMO(\Omega)$, whenever its seminorm is
bounded.  Therefore $\bff\in \setBMO(\mathbb R^2)$ if and only if
$M^\sharp \bff\in L^\infty(\mathbb R^2)$. We say that a function
$\bff\in \setBMO(\Omega)$ belongs to the subspace
$\setVMO_{loc}(\Omega)$ if $\lim_{\epsilon\to
  0+}\sup_{B\subset\Omega,\abs{B}<\epsilon}M_B^\sharp \bff=0$.  We
need also the following refinements of $\setBMO$, see~\cite{Spa65}.
For a non-decreasing function $\omega \,:\, (0,\infty) \to (0,\infty)$
we define
\begin{align*}
  M^\sharp_{\omega,B} \bff&=\frac{1}{\omega(R)}
  \dashint_B\abs{\bff-\mean{\bff}_B}dx,
\end{align*}
where $R$ is the radius of~$B$. We define the seminorm
\begin{align*}
  \norm{\bff}_{\setBMO_\omega(\Omega)}:= \sup_{B\subset\Omega}
  M_{\omega,B}^\sharp \bff.
\end{align*}
The choice $\omega(r)=1$ gives the usual $\setBMO$ seminorm, while
$\omega(r)= r^\alpha$ with $0<\alpha \leq 1$ induces the Campanato
space $\camp^{1,2+\alpha}(\Omega)$.
Its seminorm we denote $\seminorm{\bfu}_{\camp^{1,2+\beta}(\Omega)}$.

By $k\,B$, with $k>0$, we denote the ball with the same center and
$k$~times the radius. For functions $f,g$ on $\Omega$ we define
$\skp{f}{g} := \int_\Omega f(x) g(x)\,dx$. Similarly also for mappings
to $\setR^n$, $n>1$.
We write $f\sim g$ if and only if there exist
constants $c_0, c_1>0$, such that
\begin{align*}
  c_0 f &\le g \le c_1 f\,,
\end{align*}
where we always indicate on what the constants may depend.
Furthermore, we use $c$, $C$ (no index) as generic constants,
i.\,e.\ their values may change line to line but does not depend on
the important quantities.

We say that a function $\rho \,:\, [0,\infty) \to [0,\infty)$ is {\em
  almost increasing} if there is $c>0$ such that for all $0\leq s\leq
t$ the inequality $\rho(s) \leq c\, \rho(t)$ is valid. We say that
$\rho$ is {\em almost decreasing} if there is $c>0$ such that for all
$0\leq s\leq t$ the inequality $\rho(s) \geq c\, \rho(t)$ is valid. We
say that $\rho$ is {\em almost monotone} if it is almost increasing or
almost decreasing.

For a mapping $\bfu:\Omega\to\setR^2$ we define
$\bfD\bfu=(\nabla\bfu+(\nabla\bfu)^T)/2$,
$\bfW\bfu=(\nabla\bfu-(\nabla\bfu)^T)/2$ and
$([\nabla\bfu]\bfu)_j=\sum_{k=1}^2\bfu_k\partial_k\bfu_j$.  In the
parts of the article dealing with evolutionary problems we will assume
that $\bfu:\Omega\times (0,T)\to\setR^2$. In this case all operators
$\nabla$, $\bfD$, $\bfW$ and $\divergence$ are understood only with
respect to the variable $x\in\Omega$. 

For $\bfP, \bfQ\in\setR^n$ with $n\geq 1$ we define
$\bfP\cdot\bfQ=\sum_{j=1}^n\bfP_j\bfQ_j$. The symbol $\setS$ denotes
the set of symmetric $2\times2$ matrices. For a set $M\subset\setR^n$
we denote $\chi_M$ as the characteristic function of the set $M$, i.e.
$\chi(x)=1$ if $x\in M$ otherwise it is equal to zero. We write
$\setR^{\geq 0}=[0,+\infty)$.

\par 
\subsection{N-functions and the extra stress tensor $\bfA$}
\label{ssec:A}

The following definitions and results are standard in the context of
N--function (see e.\,g.~\cite{RaoRen1991}).
\begin{definition}\label{def:Nfce}
  A real function $\varphi \,:\, \setR^{\geq 0} \to \setR^{\geq 0}$ is
  said to be an N-function if it satisfies the following conditions:
  There exists the derivative $\varphi'$ of $\varphi$. This derivative
  is right continuous, non-decreasing and satisfies $\varphi'(0) = 0$
  and $\varphi'(t)>0$ for $t>0$. Especially, $\varphi$ is convex.
\end{definition}
\begin{definition}\label{def:delta2}
  We say that the N-function $\varphi$ satisfies the
  $\Delta_2$--condition, if there exists $c_1 > 0$ such that for all
  $t \geq 0$ it holds $\varphi(2t) \leq c_1\, \varphi(t)$. By
  $\Delta_2(\varphi)$ we denote the smallest constant $c_1$. For a
  family $\Phi$ of N-functions we define
  $\Delta_2(\Phi) := \sup_{\phi \in \Phi}
  \Delta_2(\phi)$.
\end{definition}

Let $\varphi$ be an N-function. We state some of its basic properties.
Since $\varphi(t) \leq \varphi(2t)$ the $\Delta_2$-condition is
equivalent to $\varphi(2t) \sim \varphi(t)$. The complementary
function $\phi^*$ is given by
\begin{align*}
  \phi^*(u) := \sup_{t \geq 0} \big(ut - \phi(t)\big).
\end{align*}
It satisfies $(\phi^*)'(t) = (\phi')^{-1}(t)$, where $(\phi')^{-1}$ is
the right-continuous inverse of~$\phi'$. Moreover, $(\phi^*)^* = \phi$.


For all $\delta>0$ there exists $c_\delta$ (only depending on
$\Delta_2({\varphi^*})$) such that for all $t, u \geq 0$ 
\begin{align}
  \label{ineq:young_classical}
  t\,u &\leq \delta\, \varphi(t) + c_\delta\, \varphi^\ast(u).
\end{align}
This inequality is called Young's inequality. For all $t\geq 0$
\begin{gather}
  \label{ineq:phiast_phi_p_pre}
  \begin{aligned}
    \frac{t}{2} \varphi'\Big(\frac{t}{2} \Big) \leq \varphi(t) \leq
    t\,\varphi'(t),
    \quad
    \varphi \bigg(\frac{\varphi^\ast(t)}{t} \bigg) \leq
    \varphi^\ast(t) \leq \varphi \bigg( \frac{2\, \varphi^\ast(t)}{t}
    \bigg).
  \end{aligned}
\end{gather}
Therefore, uniformly in $t\geq 0$ 
\begin{gather}
  \label{ineq:phiast_phi_p}
  \varphi(t) \sim \varphi'(t)\,t, \qquad 
  \varphi^\ast\big( \varphi'(t) \big) \sim \varphi(t),
\end{gather}
where the constants only depend on  $\Delta_2({\phi,\varphi^*})$.


For an N-function $\phi$ with $\Delta_2(\phi)<\infty$, we denote by
$L^\phi$ and $W^{1,\phi}$ the classical Orlicz and Sobolev-Orlicz
spaces, i.\,e.\ $\bfu \in L^\phi$ if and only if 
$\int\phi(\abs{\bfu})\,dx < \infty$ and $\bfu \in W^{1,\phi}$ if and only
if $\bfu, \nabla \bfu \in L^\phi$.  By $W^{1,\phi}_0(\Omega)$ we
denote the closure of $C^\infty_0(\Omega)$ in $W^{1,\phi}(\Omega)$.

Throughout the paper we will assume that~$\phi$ satisfies the
following assumption.
\begin{assumption}
  \label{ass:phi}
  Let $\phi$ be an N-function with $\varphi\in C^2((0,+\infty))\cap
  C^1([0,+\infty))$ such that $\varphi''$ is almost monotone on
  $(0,+\infty)$ and 
  \begin{align*}
    \varphi'(t) &\sim t\,\varphi''(t)
  \end{align*}
  uniformly in $t \geq 0$.
  The constants hidden in $\sim$ are called the {\em characteristics
    of $\phi$}.
\end{assumption}
We remark that if $\phi$ satisfies Assumption~\ref{ass:phi} below,
then $\Delta_2(\set{\phi,\phi^\ast}) < \infty$ will be automatically
satisfied, where $\Delta_2(\set{\phi,\phi^*})$ depends only on the
characteristics of $\varphi$, see for example \cite{BelDieKre11} for a
proof.  Most steps in our proof do not require that $\phi''$ is almost
monotone. It is only needed in Theorem~\ref{thm:diekap} for the
derivation of the decay estimates of Theorem~\ref{thm:decay}.

Let us now state the assumptions on~$\bfA$.
\begin{assumption}
  \label{ass:A}
  Let $\phi$ hold Assumption~\ref{ass:phi}. The vector field $\bfA
  \,:\, \setR^{2\times 2} \to \setR^{2 \times 2}$, $\bfA\in
  C^{0,1}(\setR^{2\times 2}\setminus\{0\})\cap C^0(\setR^{2\times2})$
  satisfies the non-standard $\varphi$-growth condition, i.\,e.\ there
  are $c,C>0$ such that for all $\bfP,\bfQ\in\setS$ with $\bfP\not=0$
  \begin{align}
    \label{eq:A_mon}
    \begin{aligned}
      \big(\bfA(\bfP) - \bfA(\bfQ)\big) \cdot \big(\bfP - \bfQ\big) &\geq c\,
      \varphi''(\abs{\bfP}+ \abs{\bfQ})\, \abs{\bfP - \bfQ}^2,
      \\
      \abs{\bfA(\bfP) - \bfA(\bfQ)} &\leq C\, \varphi''(\abs{\bfP}+
      \abs{\bfQ})\, \abs{\bfP - \bfQ}
    \end{aligned}
  \end{align}
  holds. 
  %
  We also require that $\bfA(\bfD)$ is symmetric for all
  $\bfD\in\setS$ and $\bfA(\bfzero)=\bfzero.$
\end{assumption}
Let us provide a few typical examples.  If $\phi$ satisfies
Assumption~\ref{ass:phi}. Then both $\bfA(\bfQ):= \varphi'(\abs{\bfQ})
\frac{\bfQ}{\abs{\bfQ}}$ and $\bfA(\bfQ):= \varphi'(\abs{\bfQ^\sym})
\frac{\bfQ^\sym}{\abs{\bfQ^\sym}}$ satisfy Assumption \ref{ass:A}.
See~\cite{DieE08} for a proof of this result. In this case,
\eqref{eq:sysinhom} is just the Euler-Lagrange equation of the local
$W^{1,\phi}_{\divergence}$-minimizer of the energy $\mathcal{J}(\bfw)
:= \int_\Omega \phi(\abs{\bfD \bfw})\,dx + \skp{\bfG}{\nabla \bfw}$.
Here $W^{1,\phi}_{\divergence}$ is the subspace of functions $\bfw \in
W^{1,\phi}$ with $\divergence \bfw=0$.  The pressure acts as a
Lagrange multiplier. This includes in particular the case of power law
and Carreau type fluids:
\begin{enumerate}
\item Power law fluids with $1<p<\infty$, $\kappa \geq 0$ and $\nu>0$
  \begin{alignat*}{3}
    \bfA(\bfQ) &= \nu (\kappa + \abs{\bfQ})^{p-2} \bfQ &&\quad\text{
      and }\quad &\varphi(t) &= \int_0^t \nu (\kappa + s)^{p-2}\,s\,ds
    \\
    \intertext{or} \bfA(\bfQ) &=
    \nu(\kappa^2+\abs{\bfQ}^2)^{\frac{p-2}{2}} \bfQ&&\quad\text{ and
    }\quad &\varphi(t) &= \int_0^t
    \nu(\kappa^2+s^2)^{\frac{p-2}{2}}\,s\,ds.
  \end{alignat*}
\item Carreau type fluids with $1<p<\infty$, $\kappa,\mu_\infty \geq
  0$ and $\nu>0$
  \begin{alignat*}{3}
    \bfA(\bfQ) &= \mu_\infty \bfQ + \nu (\kappa + \abs{\bfQ})^{p-2}
    \bfQ &&\quad\text{ and }\quad &\varphi(t) &= \int_0^t \mu_\infty s
    +\nu (\kappa + s)^{p-2}\,s\,ds.
  \end{alignat*}
\item For $1<p<\infty$, $\mu_\infty>0$, and $\nu\geq 0$
  \begin{alignat*}{3}
    \bfA(\bfQ) &= \mu_\infty \bfQ + \nu\, {\rm arcsinh}( \abs{\bfQ})
    \frac{\bfQ}{\abs{\bfQ}} &&\quad\text{ and }\quad &\varphi(t) &=
    \int_0^t \mu_\infty s +\nu\, {\rm arcsinh}(s)\, ds.
  \end{alignat*}
\end{enumerate}

We introduce the family of shifted N-functions 
$\set{\varphi_a}_{a  \geq 0}$
 by $\varphi_a'(t)/t := \varphi'(a+t)/(a+t)$. If $\phi$
satisfies Assumption~\ref{ass:phi} then $\varphi_a''(t) \sim
\varphi''(a+t)$ uniformly in $a,t \geq 0$. Moreover, $\Delta_2(\set{\phi_a,(\phi_a)^*})_{a\geq 0}<\infty$ depending only on the characteristics of $\phi$.
Let use define $\bfV\,:\, \setR^{2\times 2} \to \setR^{2\times 2}$
\begin{align}
  \label{eq:defV}
  \bfV(\bfQ ) &= \sqrt{\phi'(\abs{\bfQ }) \abs{\bfQ }} \frac{\bfQ
  }{\abs{\bfQ }}.
\end{align}
In the special case of $\bfA(\bfQ):= \varphi'(\abs{\bfQ})
\frac{\bfQ}{\abs{\bfQ}}$ the quantity $\bfV(\bfQ)$ is characterized by
\begin{align*}
  \abs{\bfV(\bfQ )}^2 &= \bfA(\bfQ ) \cdot \bfQ \qquad \text{ and }
  \qquad \frac{\bfV(\bfQ )}{\abs{\bfV(\bfQ )}} = \frac{\bfA(\bfQ
    )}{\abs{\bfA(\bfQ )}} = \frac{\bfQ }{\abs{\bfQ }}.
\end{align*}
The connection between $\bfA$, $\bfV$, and the shifted N-functions is best
reflected in the following lemma, which is a summary of Lemmas~3, 21,
and 26 of~\cite{DieE08}.
\begin{lemma}
  \label{lem:hammer}
  Let $\bfA$ satisfy Assumption~\ref{ass:A}. Then
    \begin{align*}
      \big({\bfA}(\bfP) - {\bfA}(\bfQ )\big) \cdot \big(\bfP-\bfQ 
      \big) &\sim \bigabs{ \bfV(\bfP) - \bfV(\bfQ )}^2
      \\
      &\sim\phi_{\abs{\bfQ }}\big(\abs{\bfP-\bfQ }\big)
      \\
      &\sim\big(\phi^*\big)_{\abs{\bfA(\bfQ )}}\big(\abs{\bfA(\bfP)-\bfA(\bfQ )}\big)
      \intertext{uniformly in $\bfP, \bfQ  \in \setR^{2\times 2}$.
        Moreover,}
      \bfA(\bfQ ) \cdot \bfQ  \sim \abs{\bfV(\bfQ )}^2 &\sim \phi(\abs{\bfQ }), \intertext{ and
      }
      \abs{\bfA(\bfP)-\bfA{(\bfQ )}}&\sim\big(\phi_\abs{\bfQ }\big)'\big(\abs{\bfP-\bfQ }\big),
    \end{align*}
  uniformly in $\bfP,\bfQ  \in \setR^{2\times 2}$.
\end{lemma}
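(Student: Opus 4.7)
The strategy is to show that each of the four expressions in the chain of equivalences is comparable to the common pivot
\[
\Psi(\bfP,\bfQ) := \varphi''(\abs{\bfP}+\abs{\bfQ})\,\abs{\bfP-\bfQ}^2,
\]
which Assumption~\ref{ass:A} controls directly. The monotonicity half of~\eqref{eq:A_mon} gives $(\bfA(\bfP)-\bfA(\bfQ))\cdot(\bfP-\bfQ)\gtrsim \Psi(\bfP,\bfQ)$, while combining the growth half of~\eqref{eq:A_mon} with Cauchy--Schwarz yields both the reverse bound and also $\abs{\bfA(\bfP)-\bfA(\bfQ)}\sim \varphi''(\abs{\bfP}+\abs{\bfQ})\,\abs{\bfP-\bfQ}$. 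The second ``moreover'' line then drops out once one notes that $(\varphi_{\abs{\bfQ}})'(t)\sim \varphi''(\abs{\bfQ}+t)\,t$ (immediate from $\varphi_a'(t)/t=\varphi'(a+t)/(a+t)$ and $\varphi'(s)\sim s\varphi''(s)$) and that $\varphi''(\abs{\bfP}+\abs{\bfQ})\sim\varphi''(\abs{\bfQ}+\abs{\bfP-\bfQ})$, which follows from the triangle chain $\abs{\bfP-\bfQ}\le \abs{\bfP}+\abs{\bfQ}\le \abs{\bfQ}+(\abs{\bfQ}+\abs{\bfP-\bfQ})$ together with a $\Delta_2$-bound for $\varphi''$; the latter is provided by the almost monotonicity in Assumption~\ref{ass:phi} combined with $\varphi'(t)\sim t\varphi''(t)$.

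For $\abs{\bfV(\bfP)-\bfV(\bfQ)}^2\sim \Psi(\bfP,\bfQ)$, I would write $\bfV$ in the radial form $\bfV(\bfR)=g(\abs{\bfR})\,\bfR/\abs{\bfR}$ with $g(t):=\sqrt{\varphi'(t)\,t}$, and verify from $\varphi'(t)\sim t\varphi''(t)$ that $g'(t)\sim g(t)/t$, so that the Jacobian $D\bfV(\bfR)$ has operator norm and coercivity constant both comparable to $\sqrt{\varphi''(\abs{\bfR})}$. Integrating $\bfV(\bfP)-\bfV(\bfQ)=\int_0^1 D\bfV(\bfQ+s(\bfP-\bfQ))(\bfP-\bfQ)\,ds$ along the segment and exploiting that $\abs{\bfQ+s(\bfP-\bfQ)}$ is comparable to $\abs{\bfP}+\abs{\bfQ}$ on a subinterval of definite length yields the two-sided bound. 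The equivalence with $\varphi_{\abs{\bfQ}}(\abs{\bfP-\bfQ})$ is then analogous: $\varphi_a(t)\sim \varphi_a''(t)\,t^2\sim \varphi''(a+t)\,t^2$ by~\eqref{ineq:phiast_phi_p} and the shift identity for $\varphi_a''$. Setting $\bfP=\bfzero$ and using $\bfA(\bfzero)=\bfV(\bfzero)=\bfzero$ specializes these equivalences to $\bfA(\bfQ)\cdot\bfQ\sim\abs{\bfV(\bfQ)}^2\sim\varphi(\abs{\bfQ})$.

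The main obstacle is the dual-shift equivalence with $(\varphi^*)_{\abs{\bfA(\bfQ)}}(\abs{\bfA(\bfP)-\bfA(\bfQ)})$, since here the base of the shift lives on the dual side. I would remove this asymmetry by establishing the commutation relation
\[
(\varphi_a)^*(s) \sim (\varphi^*)_{\varphi'(a)}(s) \qquad \text{uniformly in } a,s\ge 0,
\]
which one obtains by computing both Legendre transforms directly from $\varphi_a'(t)/t=\varphi'(a+t)/(a+t)$ and invoking $\Delta_2(\{\varphi,\varphi^*\})<\infty$. Combined with $\abs{\bfA(\bfQ)}\sim\varphi'(\abs{\bfQ})$, the previously established $\abs{\bfA(\bfP)-\bfA(\bfQ)}\sim (\varphi_{\abs{\bfQ}})'(\abs{\bfP-\bfQ})$, and the duality identity $\phi^*(\phi'(t))\sim\phi(t)$ from~\eqref{ineq:phiast_phi_p}, this chains to
\[
(\varphi^*)_{\abs{\bfA(\bfQ)}}(\abs{\bfA(\bfP)-\bfA(\bfQ)}) \sim (\varphi_{\abs{\bfQ}})^*\bigl((\varphi_{\abs{\bfQ}})'(\abs{\bfP-\bfQ})\bigr) \sim \varphi_{\abs{\bfQ}}(\abs{\bfP-\bfQ}),
\]
closing the cycle of equivalences.
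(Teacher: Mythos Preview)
The paper does not prove this lemma at all: it is stated as ``a summary of Lemmas~3, 21, and 26 of~\cite{DieE08}'' and simply cited. Your proposal is therefore not competing with a proof in the paper but supplying one, and the route you sketch---reducing everything to the pivot $\varphi''(\abs{\bfP}+\abs{\bfQ})\abs{\bfP-\bfQ}^2$, handling $\bfV$ via the line-integral of its Jacobian, and closing the dual-shift equivalence through $(\varphi_a)^*\sim(\varphi^*)_{\varphi'(a)}$---is exactly the approach of the cited references (the last relation is precisely Lemma~\ref{lem:shiftdual} here). So your proof is correct and in fact reconstructs the original arguments.

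Two small remarks. First, you invoke almost monotonicity of $\varphi''$ to get $\varphi''(2t)\sim\varphi''(t)$, but this is unnecessary: from $\varphi''(t)\sim\varphi'(t)/t$ and $\varphi'(2t)\sim\varphi(2t)/(2t)\sim\varphi(t)/t\sim\varphi'(t)$ the doubling bound for $\varphi''$ follows directly from $\Delta_2(\varphi)$. (The paper explicitly says almost monotonicity is only needed in Theorem~\ref{thm:diekap}.) Second, in the $\bfV$ step your segment argument cleanly gives the \emph{lower} bound via the ``good subinterval,'' but for the \emph{upper} bound $\abs{\bfQ+s(\bfP-\bfQ)}$ can be much smaller than $\abs{\bfP}+\abs{\bfQ}$ (take $\bfP=-\bfQ$, $s=\tfrac12$), so one really needs the integral estimate $\int_0^1\varphi''(\abs{\bfQ+s(\bfP-\bfQ)})\,ds\lesssim\varphi''(\abs{\bfP}+\abs{\bfQ})$ rather than a pointwise comparison; this is the content of the auxiliary lemma in~\cite{DieE08} and should be mentioned.
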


As a further consequence of Assumption~\ref{ass:phi} there exists
$1 < p \leq q < \infty$ and $K_1>0$ such that
\begin{align}
  \label{eq:typeT}
  \phi(st) \leq K_1\, \max \set{s^p, s^q} \phi(t)
\end{align}
for all $s,t \geq 0$. The exponents $p$ and $q$ are called the lower
and upper index of~$\phi$, respectively.
We say that $\phi$ is of
type $T(p,q,K_1)$ if it satisfies~\eqref{eq:typeT}, where we allow
$1\leq p \leq q < \infty$ in this definition. 

The following two lemmas show an important invariance in terms of shifts.
\begin{lemma}[{Lemma~22, \cite{DieK08}}]
  \label{lem:shiftdual}
  Let $\phi$ hold Assumption \ref{ass:phi}. Then
  $(\phi_{\abs{P}})^*(t)\sim (\phi^*)_{\abs{\bfA(P)}}(t)$ holds
  uniformly in $t\geq 0$ and $P \in \RNn$. The implicit constants
  depend on $p$, $q$ and $K_1$ only.
\end{lemma}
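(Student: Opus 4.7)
The plan is to reduce the claim to a statement involving only $\phi$ (no $\bfA$), and then verify it by splitting into regimes relative to the shift parameter.

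First, I would eliminate $\bfA$ from the statement. Applying the last equivalence in Lemma~\ref{lem:hammer} with $\bfQ=\bfzero$ (using $\bfA(\bfzero)=\bfzero$) yields $\abs{\bfA(\bfP)}\sim(\phi_0)'(\abs{\bfP})=\phi'(\abs{\bfP})$, with constants depending only on the characteristics of $\phi$. Writing $a:=\abs{\bfP}$ and $b:=\abs{\bfA(\bfP)}$, we have $b\sim\phi'(a)$. Since the assignment $c\mapsto(\phi^*)_c$ depends continuously (and comparably) on $c$---a consequence of $\Delta_2(\{\phi,\phi^*\})<\infty$ applied to $\phi^*$---it suffices to prove
\begin{align*}
(\phi_a)^*(t)\sim(\phi^*)_{\phi'(a)}(t)
\end{align*}
uniformly in $a,t\geq 0$, with constants depending only on $p$, $q$, $K_1$.

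Next I would unpack both sides using the basic N-function identities \eqref{ineq:phiast_phi_p_pre}--\eqref{ineq:phiast_phi_p}. From Assumption~\ref{ass:phi} one has $\phi'(s)\sim s\,\phi''(s)$, so $(\phi_a)'(s)=s\,\phi'(a+s)/(a+s)\sim s\,\phi''(a+s)$. Consequently, $\phi_a$ is essentially quadratic with coefficient $\phi''(a)$ for $s\leq a$ and is comparable to $\phi(s)$ for $s\geq a$. A parallel analysis of $(\phi^*)_{\phi'(a)}$, using $(\phi^*)'(u)=(\phi')^{-1}(u)$ and the characterization $\psi^*(\psi'(s))\sim\psi(s)$, shows that $(\phi^*)_{\phi'(a)}(t)\sim t^2/\phi''(a)$ in the small-$t$ regime $t\leq\phi'(a)$ and $(\phi^*)_{\phi'(a)}(t)\sim\phi^*(t)$ in the large-$t$ regime $t\geq\phi'(a)$.

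The equivalence is then established by case distinction on whether $t\leq\phi'(a)$ or $t>\phi'(a)$. For each case, one computes $(\phi_a)^*(t)$ by locating the critical $s$ in the Young duality (where $(\phi_a)'(s)=t$); in the first regime this $s$ lies in $[0,a]$ and yields $(\phi_a)^*(t)\sim t^2/\phi''(a)$, while in the second $s>a$ and the duality collapses to $(\phi_a)^*(t)\sim\phi^*(t)$. Matching these two scalings against the corresponding behavior of $(\phi^*)_{\phi'(a)}(t)$ derived above gives the claimed equivalence. The main tool throughout is the pair of relations $\psi^*(\psi'(s))\sim\psi(s)$ and $\psi(s)\sim s\,\psi'(s)$ from \eqref{ineq:phiast_phi_p}, applied both to $\psi=\phi$ and to the shifted family, together with the uniform $\Delta_2$-bound on $\{\phi_a,(\phi_a)^*\}_{a\geq 0}$.

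The main obstacle is bookkeeping: verifying that the hidden constants across the case split depend only on $p$, $q$, $K_1$ (and not on $a$). Once this uniformity is checked, gluing the two regimes together is unproblematic, since the transition for both $(\phi_a)^*$ and $(\phi^*)_{\phi'(a)}$ occurs at the common scale $t\sim\phi'(a)$, so the two piecewise descriptions agree up to a multiplicative constant on the overlap.
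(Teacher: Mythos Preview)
The paper does not give its own proof of this lemma: it is simply quoted from \cite{DieK08} (Lemma~22 there), so there is no in-paper argument to compare against. Your proposal is therefore being assessed on its own merits.

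Your outline is essentially correct and is in fact the standard way this equivalence is established. The reduction step---replacing $\abs{\bfA(\bfP)}$ by $\phi'(\abs{\bfP})$---is sound; note that you do not even need the full strength of Lemma~\ref{lem:hammer} for this, since $\abs{\bfA(\bfP)}\sim\phi''(\abs{\bfP})\abs{\bfP}\sim\phi'(\abs{\bfP})$ follows directly from Assumption~\ref{ass:A} with $\bfQ=\bfzero$ together with Assumption~\ref{ass:phi}. This avoids any worry about circularity (the third line of Lemma~\ref{lem:hammer} itself involves $(\phi^*)_{\abs{\bfA(\bfQ)}}$). The stability of $(\phi^*)_c$ under comparable changes of $c$ that you invoke is indeed a consequence of $\Delta_2(\{\phi^*,\phi\})<\infty$, since $c_1\sim c_2$ gives $c_1+t\sim c_2+t$ and hence $(\phi^*)'(c_1+t)\sim(\phi^*)'(c_2+t)$.

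The regime split $t\lesssim\phi'(a)$ versus $t\gtrsim\phi'(a)$ is the right one, and your identification of the asymptotics $t^2/\phi''(a)$ and $\phi^*(t)$ on the two sides is accurate. The only place to be slightly careful is the transition: when you compute $(\phi_a)^*(t)$ in the large-$t$ regime you need that the supremum in the Legendre transform is not spoiled by the contribution from $s\leq a$; this is handled by the observation that $\phi_a(s)\leq c\,\phi(a+s)$ globally and $\phi_a(s)\geq c^{-1}\phi(s)$ for $s\geq a$, which together pin down $(\phi_a)^*(t)\sim\phi^*(t)$ once $t\geq c\,\phi'(a)$. With that detail checked, the uniformity of all constants in $a$ follows from the uniform $\Delta_2$-bound on $\{\phi_a,(\phi_a)^*\}_{a\geq 0}$, which as the paper notes depends only on the characteristics of $\phi$ and hence on $p$, $q$, $K_1$.
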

We define
\begin{align}
  \label{eq:bar}
  \pbar := \min \set{p,2}\text{ and }\qbar := \max \set{q,2}.
\end{align} 
\begin{lemma} 
  \label{lem:phi1A} 
  Let $\phi$ be of type $T(p,q,K_1)$ and $P\in \RNn$, then there is a $K$ depending on $K_1,p,q$ such that
  $\phi_{\abs{P}}$ is of type $T(\pbar, \qbar, K$) and
  $(\phi_{\abs{P}})^*$ and $(\phi^*)_{\abs{\bfA(P)}}$ are of type
  $T(\qbar', \pbar', K)$.
\end{lemma}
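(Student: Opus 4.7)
The plan is to first establish that $\varphi_{|P|}$ is of type $T(\pbar,\qbar,K)$ by exploiting the two-regime behaviour of the shifted N-function, and then deduce the conjugate statements from standard duality together with Lemma~\ref{lem:shiftdual}. Write $a := |P|$. Combining $\varphi_a'(t) = t\,\varphi'(a+t)/(a+t)$ with Assumption~\ref{ass:phi} (which gives $\varphi'(t) \sim t\,\varphi''(t)$ and hence $\varphi(t) \sim t^2\,\varphi''(t)$), one checks by direct integration the two-regime asymptotics
\begin{equation*}
  \varphi_a(t) \sim
  \begin{cases}
    \dfrac{t^2}{a^2}\,\varphi(a) & \text{if } 0 \leq t \leq a, \\[2pt]
    \varphi(t) & \text{if } t \geq a,
  \end{cases}
\end{equation*}
with constants depending only on the characteristics of $\varphi$ (this is essentially recorded in \cite{DieE08}). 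The case $a=0$ is trivial since $\pbar \leq p \leq q \leq \qbar$, so we may assume $a>0$.

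The core of the argument is then a case analysis on the positions of $t$ and $st$ relative to $a$. As a representative case consider $t \leq a \leq st$ (so $s \geq 1$). Applying the asymptotics above and the bound $\varphi(st) \leq K_1 (st/a)^q \varphi(a)$ from $\varphi \in T(p,q,K_1)$, which applies since $st/a\geq 1$, one computes
\[
  \frac{\varphi_a(st)}{\varphi_a(t)}
  \lesssim \frac{\varphi(st)\, a^2}{t^2\, \varphi(a)}
  \lesssim K_1 \Big(\frac{st}{a}\Big)^{q} \frac{a^2}{t^2}
  = K_1\, s^q \Big(\frac{a}{t}\Big)^{2-q}.
\]
If $q \geq 2$ then $\qbar=q$, $a/t \geq 1$ and $2-q \leq 0$, hence the ratio is $\leq K_1 s^\qbar$; if $q \leq 2$ then $\qbar=2$ and $(a/t)^{2-q} \leq s^{2-q}$ since $a \leq st$, so the ratio is $\leq K_1 s^2 = K_1 s^\qbar$. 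The remaining three cases ($t,st \leq a$; $st \leq a \leq t$; $a \leq t,st$) are treated analogously, the first via $\varphi_a(st)/\varphi_a(t) \sim s^2$ and $s^2 \leq \max\{s^\pbar,s^\qbar\}$, the last directly from the type of $\varphi$, and the middle (symmetric to the one above) yielding a bound in $s^\pbar$. In every case the constant depends only on $p,q,K_1$, which yields the first assertion.

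Once $\varphi_a$ is known to be of type $T(\pbar,\qbar,K)$ with $\pbar>1$, the second claim follows from a standard duality property of $T$-type N-functions: if $\psi \in T(r,s,K)$ with $1<r\leq s<\infty$, then $\psi^* \in T(s',r',K')$ with $K'$ depending only on $K,r,s$. This gives $(\varphi_{|P|})^* \in T(\qbar',\pbar',K')$, and Lemma~\ref{lem:shiftdual}, which identifies $(\varphi^*)_{|\bfA(P)|}$ and $(\varphi_{|P|})^*$ up to constants depending only on $p,q,K_1$, transfers the same conclusion to $(\varphi^*)_{|\bfA(P)|}$. I expect the principal obstacle to be the bookkeeping of the case analysis: one must keep track of which direction of the $T(p,q,K_1)$ inequality is admissible in each sub-case, and see explicitly how the quadratic near-zero behaviour of $\varphi_a$ forces the transition from the original indices $(p,q)$ to $(\pbar,\qbar) = (\min\{p,2\},\max\{q,2\})$.
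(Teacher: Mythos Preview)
Your argument is correct. The two-regime description of $\varphi_a$ together with the four-case analysis on the relative positions of $t$, $st$ and $a$ is the standard route to this lemma, and your treatment of the representative mixed case is clean; the symmetric case $st \leq a \leq t$ works exactly as you indicate, producing the bound $s^{\pbar}$ in place of $s^{\qbar}$. The duality step, combined with Lemma~\ref{lem:shiftdual}, is also the right way to handle the conjugate functions.

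Note that the paper itself does not prove this lemma: it simply refers to \cite[Lemma~2.3]{bmophi}. Your write-up therefore supplies the argument that the present paper omits. Since the cited source is by the same authors and treats the same circle of ideas, the proof there is in all likelihood the same case analysis you give here, so there is no genuine methodological difference to compare.
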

\noindent
The proof can be found in \cite[Lemma~2.3]{bmophi}.  Finally, we can
deduce from Lemma~\ref{lem:phi1A} the following versions of {\em
  Young's inequality}. For all $\delta \in (0,1]$ and all $t, s \geq
0$ it holds
\begin{align}
  \label{eq:young}
  \begin{aligned}
    t s &\leq K^{\bar{q}}\, \delta^{1-\bar{q}}\, \phi(t) +
    \delta\, \phi^\ast(s),
    \\
    t s &\leq \delta\, \phi(t) + K^{\bar{p}'-1}\,
    \delta^{1-\bar{p}'}\, \phi^\ast(s).
  \end{aligned}
\end{align}

\section{Proof of the main theorem}
\label{sec:gehring}

Let $\bfu,\pi$ be the local weak solution of~\eqref{eq:sysinhom}, in
the sense that $\bfu \in W^{1,\phi}_{\divergence}(\Omega)$, $\pi \in
L^{\phi^*}(\Omega)$, and
\begin{align}
  \label{eq:sysinhom-press}
  \forall\bfxi\in W^{1,\varphi}_0(\Omega): \skp{\bfA(\bfD
    \bfu)}{\bfD \bfxi} - \skp{\pi}{\divergence \bfxi} &=
  \skp{\bfG}{\bfD \bfxi},
\end{align}
where we used that $\bfA(\bfD \bfu)$ and $\bfG$ are symmetric.  To
omit the pressure, we will use divergence free test function, i.e.
\begin{align}
  \label{eq:sysinhom-weak}
  \forall\bfxi\in W^{1,\varphi}_{0,\divergence}(\Omega):
  \skp{\bfA(\bfD \bfu)}{\bfD \bfxi} &= \skp{\bfG}{\bfD \bfxi}.
\end{align}
The method of the proof of Theorem~\ref{thm:main} was developed in
\cite{bmophi} for elliptic systems with the main part depending on
full gradient of solutions. It is based on a reverse H{\"o}lder
inequality, an approximation by the problem with zero right hand side
and a decay estimate for this approximation. These three properties
are discussed in the subsequent subsections. Note that the restriction
to the planar case and $\phi''$ almost monotone is only needed for the
decay estimate of Subsection~\ref{ssec:decay}.  The first two
subsections are valid independently of these extra assumptions.

\subsection{Reverse H{\"o}lder inequality}
In this section we show the reverse H{\"o}lder estimate for solutions of
\eqref{eq:sysinhom}. To prove the result we need a Sobolev-Poincar{\'e}
inequality in the Orlicz setting from~\cite[Lemma 7]{DieE08}.
\begin{theorem}[Sobolev-Poincar{\'e}]
  \label{thm:poincare}
  Let $\psi$ be an N-function such that $\psi$ and $\psi^*$ satisfy the
  $\Delta_2$-condition.  Then there exists $0 < \theta < 1$ and $c>0$,
  which depend only on $\Delta_2(\set{\psi,\psi^*})$ such that
  $\psi^\theta$ is almost convex\footnote{It is proportional to a
    convex function.} and the following holds. For every ball $B
  \subset \setR^n$ with radius~$R$ and every $\bfv \in W^{1,\psi}(B)$
  holds
  \begin{align}
    \label{eq:poincare} \dashint_B \psi\bigg( \frac{\abs{\bfv -
        \mean{\bfv}_B}}{R} \bigg)dx  \leq c\, \bigg( \dashint_B
    \psi^{\theta}(\abs{\nabla \bfv})dx 
    \bigg)^\frac{1}{\theta}.
  \end{align}
\end{theorem}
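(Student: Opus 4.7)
My plan is to reduce the Orlicz Sobolev--Poincar\'e inequality to its classical Lebesgue prototype, exploiting the $\Delta_2$-structure of $\psi$ and~$\psi^\ast$ to pass from pure powers to the general N-function~$\psi$.

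\textbf{Step 1 (almost convexity of $\psi^\theta$).} Since $\psi,\psi^\ast\in\Delta_2$, the function $\psi$ is of type $T(p,q,K)$ for some $1<p\leq q<\infty$, with constants controlled by $\Delta_2(\{\psi,\psi^\ast\})$; in particular the lower index~$p$ is strictly greater than~$1$. Fix any $\theta\in(1/p,\,1)$. Then $\psi^\theta(t)/t^{\theta p}$ is almost increasing with exponent $\theta p>1$, so $\psi^\theta$ is equivalent to a genuine convex N-function, which yields the almost convexity assertion.

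\textbf{Step 2 (inequality for pure powers).} The classical Sobolev--Poincar\'e inequality holds in every Lebesgue pair $(L^r,L^{r^\ast})$ with $r\in[1,n)$ and $r^\ast=nr/(n-r)$:
\begin{align*}
  \left(\dashint_B\left|\tfrac{\bfv-\mean{\bfv}_B}{R}\right|^{r^\ast}dx\right)^{1/r^\ast}\leq c\left(\dashint_B|\nabla\bfv|^r\,dx\right)^{1/r}.
\end{align*}
Applying this with $r=s\theta$ (requiring $\theta$ close enough to~$1$ that $s\theta\geq 1$ and $r^\ast\geq s$ for $s\in[p,q]$) and downgrading from $r^\ast$ to $s$ via Lebesgue monotonicity, then raising to the $s$-th power, gives for every pure power $\psi(t)=t^s$ the bound
\begin{align*}
  \dashint_B\left|\tfrac{\bfv-\mean{\bfv}_B}{R}\right|^s dx\leq c\left(\dashint_B|\nabla\bfv|^{s\theta}\,dx\right)^{1/\theta},
\end{align*}
where the desired $(1/\theta)$-exponent arises naturally from the ratio $s/(s\theta)$ of outer to inner exponents.

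\textbf{Step 3 (closure in the Orlicz scale and main obstacle).} To lift from powers to a general $\psi$ of type $T(p,q,K)$, we exploit that $\psi$ is quantitatively sandwiched between $t^p$ and~$t^q$. Either via the layer-cake representation $\psi(t)=\int_0^t\psi'(\sigma)\,d\sigma$, applying Step~2 at each level, or via a Rubio de Francia--type extrapolation principle, the Lebesgue inequality lifts to the Orlicz setting. The almost convexity of~$\psi^\theta$ from Step~1 supplies the decisive Jensen step that reassembles the level-wise pieces into a single $(1/\theta)$-power of the Orlicz integral $\dashint_B\psi^\theta(|\nabla\bfv|)\,dx$. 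The main technical obstacle lies precisely here: the extrapolation must uniformly preserve the $\theta$-gain and keep the constants dependent only on $\Delta_2(\{\psi,\psi^\ast\})$; this is made possible by the joint action of the quantitative sandwich $t^p\lesssim\psi(t)\lesssim t^q$ and the almost convexity of~$\psi^\theta$.
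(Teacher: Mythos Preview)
The paper does not prove this theorem at all; it is quoted as an auxiliary tool and simply cited from \cite[Lemma~7]{DieE08}. So there is no ``paper's own proof'' to compare against.

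As for your proposal itself: Steps~1 and~2 are fine, but Step~3 is not a proof, it is a wish. Neither of the two mechanisms you invoke does what you need. The layer-cake identity $\psi(t)=\int_0^t\psi'(\sigma)\,d\sigma$ rewrites $\psi$ as a superposition of truncations, not of pure powers~$t^s$; applying Step~2 ``at each level~$\sigma$'' produces nothing you can sum back into $\psi^\theta(|\nabla\bfv|)$ on the right. Rubio de Francia extrapolation, in any of its standard forms, concerns weighted $L^p$-inequalities for a fixed sublinear operator and does not deliver an Orlicz Sobolev--Poincar\'e inequality with the \emph{extra} $\theta$-gain on the right-hand side; you would have to state precisely which extrapolation theorem you mean and verify its hypotheses, and none of the off-the-shelf versions matches this setting.

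The argument that actually works (and is the one carried out in \cite{DieE08}) bypasses pure powers entirely. One starts from the pointwise Riesz-potential estimate
\[
  \frac{|\bfv(x)-\mean{\bfv}_B|}{R}\leq \frac{c}{R}\int_B\frac{|\nabla\bfv(y)|}{|x-y|^{n-1}}\,dy,
\]
applies $\psi$, and then uses a Hedberg-type splitting of the convolution kernel together with the $\Delta_2$-condition and the almost convexity of $\psi^\theta$ (your Step~1) to push~$\psi$ through the integral at the cost of the exponent~$1/\theta$. The almost convexity is indeed the key, but it enters through Jensen applied to the Riesz potential, not through any reassembly of power-type inequalities.
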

\begin{remark}
  It is not possible to replace full gradient on the right hand side with the symmetric one only. Consider $\bfv=(x_2,-x_1)$ on the unit ball.
\end{remark}

We also need the following version of the Korn's inequality for Orlicz
spaces, which is a minor modification of the one in~\cite[Theorem
6.13]{DieRuSchu2010}. See~\cite{BreDie11} for sharp conditions for
Korn's inequality on Orlicz spaces. 
\begin{lemma}
  \label{lem:korn-anti}
  Let $B\subset\setR^n$ be a ball. Let $\psi$ be an N-function such
  that $\psi$ and $\psi^*$ satisfy the $\Delta_2$-condition (for
  example let $\psi$ satisfy Assumption~\ref{ass:phi}).  Then for all
  $\bfv\in W^{1,\psi}(B)$ with $\mean{\bfW\bfv}_B=0$ the inequality
  \begin{align*}
    \int_B\psi(|\nabla\bfv|) \,dx\leq C\int_B\psi(|\bfD\bfv|)\,dx
  \end{align*}
  holds.  The constant $C>0$ depends only on $\Delta_2(\set{\psi,
    \psi^*})<\infty$.  
\end{lemma}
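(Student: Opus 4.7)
The plan is to decompose $\nabla\bfv = \bfD\bfv + \bfW\bfv$ and exploit the orthogonality $\abs{\nabla\bfv}^2 = \abs{\bfD\bfv}^2 + \abs{\bfW\bfv}^2$. Together with $\psi\in\Delta_2$ this reduces the lemma to showing
\begin{align*}
\int_B\psi(\abs{\bfW\bfv})\,dx \leq C\int_B\psi(\abs{\bfD\bfv})\,dx.
\end{align*}
The normalization $\mean{\bfW\bfv}_B=0$ kills the infinitesimal rotations that form the kernel of $\bfD$, so that $\bfW\bfv$ is itself a mean-zero quantity that must be reconstructed from its derivatives.

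The algebraic core is the pointwise identity
\begin{align*}
\partial_k(\bfW\bfv)_{ij} = \partial_i(\bfD\bfv)_{jk} - \partial_j(\bfD\bfv)_{ik},
\end{align*}
which expresses $\nabla\bfW\bfv$ in terms of first derivatives of $\bfD\bfv$. To turn this into a bound on $\bfW\bfv$ itself I would use the duality $(L^\psi)^\ast = L^{\psi^*}$ combined with the Bogovskii operator on the ball: for any $g\in L^{\psi^*}(B)$ with $\mean{g}_B=0$ choose $\boldsymbol\phi\in W^{1,\psi^*}_0(B)$ with $\divergence\boldsymbol\phi=g$ and $\norm{\nabla\boldsymbol\phi}_{L^{\psi^*}(B)}\leq C\norm{g}_{L^{\psi^*}(B)}$. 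Integrating by parts twice, inserting the identity above, and applying H\"older's inequality in Orlicz spaces yields
\begin{align*}
\int_B(\bfW\bfv)_{ij}\,g\,dx
&= -\int_B\partial_k(\bfW\bfv)_{ij}\,\phi_k\,dx \\
&= \int_B\bigl((\bfD\bfv)_{jk}\partial_i\phi_k - (\bfD\bfv)_{ik}\partial_j\phi_k\bigr)\,dx \\
&\leq C\,\norm{\bfD\bfv}_{L^\psi(B)}\,\norm{g}_{L^{\psi^*}(B)}.
\end{align*}
Since $\mean{\bfW\bfv}_B=0$, restricting to mean-zero test functions in the duality characterization recovers the full Luxemburg norm of $\bfW\bfv$ up to a constant, so that $\norm{\bfW\bfv}_{L^\psi(B)}\leq C\norm{\bfD\bfv}_{L^\psi(B)}$. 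The modular inequality then follows from the $\Delta_2$-equivalence between the Luxemburg norm and the modular.

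The main obstacle is the Orlicz-continuity of the Bogovskii operator, i.e.\ its boundedness from the mean-zero subspace of $L^{\psi^*}(B)$ into $W^{1,\psi^*}_0(B)$. This is classical under $\Delta_2(\set{\psi,\psi^*})<\infty$, since Bogovskii's solution admits a representation as a Calder\'on--Zygmund singular integral and such operators extend to $L^{\psi^*}$ precisely under the $\Delta_2$-assumption on $\psi^*$. Once this ingredient is in place, the argument is a direct transcription of the $L^p$-case; the only change with respect to Theorem~6.13 of~\cite{DieRuSchu2010} is that the zero-trace normalization used there is replaced by the mean-value condition $\mean{\bfW\bfv}_B=0$, which affects nothing in the duality computation.
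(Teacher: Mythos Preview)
Your route is valid but considerably longer than the paper's. The paper does not reprove any Korn inequality; it simply invokes the known oscillation form from~\cite[Theorem~6.13]{DieRuSchu2010},
\[
\int_B\psi(|\nabla\bfv-\mean{\nabla \bfv}_B|)\,dx\leq C\int_B\psi(|\bfD\bfv-\mean{\bfD \bfv}_B|)\,dx,
\]
and then observes that $\mean{\bfW\bfv}_B=0$ forces $\mean{\nabla\bfv}_B=\mean{\bfD\bfv}_B$, so that $\nabla\bfv=(\nabla\bfv-\mean{\nabla\bfv}_B)+\mean{\bfD\bfv}_B$. Triangle inequality, $\Delta_2$, and Jensen finish the argument in three lines. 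Your proof instead re-derives a Korn-type bound from scratch via the differential identity $\partial_k(\bfW\bfv)_{ij}=\partial_i(\bfD\bfv)_{jk}-\partial_j(\bfD\bfv)_{ik}$ together with Bogovski\u{\i} duality; this is essentially how results such as Theorem~6.13 are proved in the first place, so you are doing strictly more work than the lemma requires. (Incidentally, Theorem~6.13 there is the oscillation form above, not a zero-trace version.)

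One step needs tightening. Your final sentence, that ``the modular inequality then follows from the $\Delta_2$-equivalence between the Luxemburg norm and the modular'', is not correct as written: a norm inequality $\norm{\bfW\bfv}_{L^\psi}\leq C\norm{\bfD\bfv}_{L^\psi}$ for a linear operator does \emph{not} in general imply the modular inequality $\int_B\psi(|\bfW\bfv|)\,dx\leq C'\int_B\psi(|\bfD\bfv|)\,dx$, even under $\Delta_2(\set{\psi,\psi^*})<\infty$. What rescues your argument is that the Bogovski\u{\i} operator itself obeys \emph{modular} bounds on Orlicz spaces (this is exactly~\cite[Theorem~6.6]{DieRuSchu2010}, used elsewhere in the paper); equivalently, its boundedness on all $L^p$, $1<p<\infty$, combined with interpolation/extrapolation yields the modular estimate. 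Phrase the last step through these modular bounds rather than through the norm--modular ``equivalence''.
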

\begin{proof}
  From~\cite[Theorem 6.13]{DieRuSchu2010} we know that
  \begin{align}
    \label{eq:korn-anti1}
    \int_B\psi(|\nabla\bfv-\mean{\nabla \bfv}_B|) \,dx\leq
    C\int_B\psi(|\bfD\bfv-\mean{\bfD \bfv}_B|)\,dx.
  \end{align}
  Using $\mean{\bfW \bfv}_B=0$ we have $\nabla \bfv = (\nabla \bfv -
  \mean{\nabla \bfv}_B) + \mean{\bfD \bfv}_B$. Thus, by triangle
  inequality and~\eqref{eq:korn-anti1} we get
  \begin{align*}
    \int_B\psi(|\nabla\bfv|) \,dx\leq
    c\int_B\psi(|\bfD\bfv-\mean{\bfD \bfv}_B|)\,dx + c \int_B
    \psi(\abs{\mean{\bfD \bfv}_B}) \,dx,
  \end{align*}
  where we also used $\Delta_2(\psi)<\infty$. Now, the claim follows
  by triangle inequality and Jensen's inequality.
\end{proof}

As in~\cite{bmophi} we need a reverse H{\"o}lder estimate for the
oscillation of the gradients. Additional difficulties arise due to the
symmetric gradient and the hidden pressure (so that the test functions
must be divergence free).
\begin{lemma}
  \label{lem:VPrev}
  Let $\bfu$ be a local weak solution of \eqref{eq:sysinhom} and $B$
  be a ball satisfying $2B\subset\Omega$.  There exists $\theta \in
  (0,1)$ and $c>0$ only depending on the characteristics of~$\phi$,
  such that for all $\bfP, \bfG_0\in \setS$,
  \begin{align*}
    \begin{aligned}
      \dashint_{B} \abs{\bfV(\bfD\bfu) - \bfV(\bfP)}^2dx &\leq c\,
      \bigg(\dashint_{2B} \abs{\bfV(\bfD \bfu) -
        \bfV(\bfP)}^{2\theta}dx\bigg)^{ \frac 1\theta}
      \\
      &\quad + c\dashint_{2B} (\phi^*)_{\abs{\bfA(\bfP)}}(\abs{\bfG
        -\bfG_0})dx
    \end{aligned}
  \end{align*}
  holds.  The constant $c>0$ depends only on the characteristics of
  $\phi\in T(p,q,K)$ and the constants in Assumption~\ref{ass:A}.
\end{lemma}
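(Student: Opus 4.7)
The plan is to follow the Caccioppoli-plus-reverse-H\"older template of \cite{bmophi}, adapted to the Stokes setting by a \emph{Bogovskii correction} (to preserve the divergence constraint on the test function) and by Korn's inequality (Lemma~\ref{lem:korn-anti}) to pass from the full to the symmetric gradient. First I would fix $\eta\in C_c^\infty(2B)$ with $\eta\equiv 1$ on $B$ and $\abs{\nabla\eta}\lesssim R^{-1}$, and choose the affine map $\bfq$ with $\nabla\bfq=\bfP+\mean{\bfW\bfu}_{2B}$ and constant term fixed so that $\mean{\bfu-\bfq}_{2B}=0$; this gives $\bfD\bfq=\bfP$ and $\mean{\bfW(\bfu-\bfq)}_{2B}=0$. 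Since $\divergence\bfq=\tr\bfP$ need not vanish, I would let $\bfw\in W^{1,\phi}_0(2B)$ be a Bogovskii preimage of $\divergence(\eta(\bfu-\bfq))$ (zero-mean on $2B$ because $\eta|_{\partial(2B)}=0$) and insert the divergence-free test function $\bfxi:=\eta(\bfu-\bfq)-\bfw$ into \eqref{eq:sysinhom-weak}.

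Subtracting the constants $\bfA(\bfP)$ and $\bfG_0$ (both of which integrate to zero against $\bfD\bfxi$) and expanding $\bfD\bfxi=\eta(\bfD\bfu-\bfP)+\nabla\eta\odot(\bfu-\bfq)-\bfD\bfw$, the leading left-hand term yields $\gtrsim\int\eta\,\abs{\bfV(\bfD\bfu)-\bfV(\bfP)}^2 dx$ by Lemma~\ref{lem:hammer}. All remaining pieces are estimated via the shifted Young inequality \eqref{eq:young} in the dual pair $(\phi_{\abs{\bfP}},(\phi_{\abs{\bfP}})^\ast)$, using Lemma~\ref{lem:shiftdual} to identify $(\phi_{\abs{\bfP}})^\ast(\abs{\bfA(\bfD\bfu)-\bfA(\bfP)})$ with $\abs{\bfV(\bfD\bfu)-\bfV(\bfP)}^2$ and absorbing a small multiple of the main term on the left. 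This produces
\begin{align*}
  \dashint_B\abs{\bfV(\bfD\bfu)-\bfV(\bfP)}^2 dx\lesssim\dashint_{2B}\phi_{\abs{\bfP}}\!\Big(\tfrac{\abs{\bfu-\bfq}}{R}\Big)dx+\dashint_{2B}\phi_{\abs{\bfP}}(\abs{\nabla\bfw}) dx+\dashint_{2B}(\phi^\ast)_{\abs{\bfA(\bfP)}}(\abs{\bfG-\bfG_0}) dx.
\end{align*}

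The Bogovskii contribution is controlled by continuity of $\Bog_{2B}$ on the Orlicz space $L^{\phi_{\abs{\bfP}}}$ (valid because $\phi_{\abs{\bfP}}$ and its dual lie in $\Delta_2$ uniformly by Lemma~\ref{lem:phi1A}), yielding $\int\phi_{\abs{\bfP}}(\abs{\nabla\bfw}) dx\lesssim\int_{2B}\phi_{\abs{\bfP}}\big(\abs{\nabla\eta\cdot(\bfu-\bfq)}+\eta\abs{\tr\bfP}\big) dx$. The piece $\nabla\eta\cdot(\bfu-\bfq)$ merges into the Poincar\'e term, while $\eta\abs{\tr\bfP}$ is handled through the pointwise identity $\abs{\tr\bfP}=\abs{\tr(\bfD\bfu-\bfP)}\leq\sqrt{2}\,\abs{\bfD\bfu-\bfP}$ (a consequence of $\tr\bfD\bfu=0$). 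Finally, Sobolev--Poincar\'e (Theorem~\ref{thm:poincare}) followed by Korn's inequality (Lemma~\ref{lem:korn-anti}, admissible since $\mean{\bfW(\bfu-\bfq)}_{2B}=0$), and then Lemma~\ref{lem:hammer}, chain into
\begin{align*}
  \dashint_{2B}\phi_{\abs{\bfP}}\!\Big(\tfrac{\abs{\bfu-\bfq}}{R}\Big)dx\lesssim\Big(\dashint_{2B}\phi_{\abs{\bfP}}^\theta(\abs{\nabla(\bfu-\bfq)}) dx\Big)^{1/\theta}\lesssim\Big(\dashint_{2B}\abs{\bfV(\bfD\bfu)-\bfV(\bfP)}^{2\theta} dx\Big)^{1/\theta},
\end{align*}
which closes the estimate.

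The principal obstacle is the Bogovskii correction. Because $\bfP\in\setS$ is arbitrary, $\divergence\bfq=\tr\bfP\neq 0$ in general, so the naive test function fails to be divergence-free and the Bogovskii data carries the apparently dangerous piece $\eta\tr\bfP$ of size $\sim\abs{\bfP}$. Taming it rests on the structural compensation $\abs{\tr\bfP}\lesssim\abs{\bfD\bfu-\bfP}$ inherited from $\divergence\bfu=0$ — a feature absent in the elliptic analogue of \cite{bmophi}. A secondary technical point is the transfer from $\nabla$ to $\bfD$ after the $\theta$-exponent of Sobolev--Poincar\'e, which is ensured by applying Korn's inequality in the shifted Orlicz scale provided by Lemma~\ref{lem:phi1A}.
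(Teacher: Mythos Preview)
Your approach matches the paper's almost step for step: the same affine comparison (with $\bfD\bfq=\bfP$ and $\bfW\bfq=\mean{\bfW\bfu}_{2B}$), the Bogovskii correction to restore $\divergence\bfxi=0$, shifted Young inequalities together with Lemma~\ref{lem:hammer}, and then Sobolev--Poincar\'e followed by Korn (Lemma~\ref{lem:korn-anti}) to reach the $\theta$-power; you also correctly identify the structural compensation $\tr\bfP=-\tr(\bfD\bfu-\bfP)$ coming from $\divergence\bfu=0$ that controls the trace part of the Bogovskii data.

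There is, however, a genuine technical issue in the absorption step as you describe it. With the cutoff taken to the first power, the cross term $\skp{\bfA(\bfD\bfu)-\bfA(\bfP)}{(\bfu-\bfq)\otimes_{sym}\nabla\eta}$ produces, after Young's inequality, a piece $\delta\int_{\spt\nabla\eta}\abs{\bfV(\bfD\bfu)-\bfV(\bfP)}^2$ that carries \emph{no} $\eta$-weight; it therefore cannot be absorbed into the main term $\int\eta\,\abs{\bfV(\bfD\bfu)-\bfV(\bfP)}^2$. The paper handles this by working with $\eta^{\qbar}$ instead of $\eta$: then $\nabla(\eta^{\qbar})=\qbar\,\eta^{\qbar-1}\nabla\eta$ retains a factor $\eta^{\qbar-1}$, and since $(\phi^*)_{\abs{\bfA(\bfP)}}$ is of type $T(\qbar',\pbar',K)$ by Lemma~\ref{lem:phi1A}, one gets $(\phi^*)_{\abs{\bfA(\bfP)}}(\eta^{\qbar-1}t)\lesssim\eta^{(\qbar-1)\qbar'}(\phi^*)_{\abs{\bfA(\bfP)}}(t)=\eta^{\qbar}(\phi^*)_{\abs{\bfA(\bfP)}}(t)$, which \emph{is} absorbable into $(I)$. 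A second, smaller point: your pointwise bound $\abs{\tr\bfP}\le\sqrt{2}\,\abs{\bfD\bfu-\bfP}$ by itself returns only $\int_{2B}\phi_{\abs{\bfP}}(\abs{\bfD\bfu-\bfP})\sim\int_{2B}\abs{\bfV(\bfD\bfu)-\bfV(\bfP)}^2$, not the $\theta$-power. The paper exploits that $\abs{\tr\bfP}$ is \emph{constant} to write $\phi_{\abs{\bfP}}(\abs{\tr\bfP})=\big(\dashint_{2B}\phi_{\abs{\bfP}}^\theta(\abs{\tr\bfP})\big)^{1/\theta}$ first and only then applies the pointwise bound inside the $\theta$-power, landing directly on $\big(\dashint_{2B}\abs{\bfV(\bfD\bfu)-\bfV(\bfP)}^{2\theta}\big)^{1/\theta}$; see~\eqref{eq:Bog_P}.
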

\begin{proof}
  Let $\eta \in C^\infty_0(2B)$ with $\chi_{B} \leq \eta \leq
  \chi_{3B/2}$ and $\abs{\nabla \eta} \leq c/{R}$, where $R$ is the
  radius of~$B$. We define $\bfpsi = \eta^{\qbar}(\bfu-\bfz)$, where
  $\bfz$ is a linear function such that $\mean{\bfu - \bfz}_{2B} = 0$,
  $\bfD \bfz = \bfP$, and $\bfW \bfz = \mean{\bfW \bfu}_{2B}$.  We
  cannot use $\bfpsi$ as test function in the pressure free
  formulation~\eqref{eq:sysinhom-weak}, since its divergence does not
  vanish. Therefore, we correct $\bfpsi$ by help of the \Bogovskii{}
  operator~$\Bog$ from~\cite{MR82b:35135}.  In particular, $\bfw =
  \Bog(\divergence \bfpsi)$ is a special solution of the auxiliary
  problem
  \begin{alignat*}{2}
    \divergence \bfw&=\divergence \bfpsi &&\quad\mbox{in $\frac 32B$}
    \\
    \bfw&=0&&\quad\mbox{in $\partial( \frac 32B)$.}
  \end{alignat*}
  We extend $\bfw$ by zero outside of $\frac 32 B$. It has been shown
  in ~\cite[Theorem~6.6]{DieRuSchu2010} that $\nabla \bfw$ can
  estimated by $\divergence \psi$ in any suitable Orlicz spaces. In
  our case we use the following estimate in terms of
  $\phi_{\abs{\bfP}}$.
  \begin{align*}
    \dashint_{2B}\varphi_{\abs{\bfP}}(\abs{\bfD\bfw})dx\leq
    C\dashint_{2B}\varphi_{\abs{\bfP}} (\abs{\divergence \bfpsi})\,dx.
  \end{align*}
  The constant $C>0$ depends only on the characteristics of~$\phi$.

  Using $\divergence \bfu= 0$, we have
  \begin{align*}
    \divergence \bfpsi = \nabla(\eta^{\qbar})\, (\bfu - \bfz) + \eta^\qbar
    \divergence(\bfu - \bfz) = \qbar \eta^{\qbar-1} \nabla \eta\, (\bfu - \bfz) -
    \eta^\qbar \trace \bfP.
  \end{align*}
  This implies
  \begin{align}
    \label{est:Bog}
    \dashint_{2B}\varphi_{\abs{\bfP}}(\abs{\bfD\bfw})dx &\leq
    C\dashint_{2B}\varphi_{\abs{\bfP}} \bigg(\frac{\abs{\bfu-\bfz}}{R}
    \bigg)\,dx + C\,\dashint_{2B}\varphi_{\abs{\bfP}} (\abs{\trace
      \bfP})\,dx.
  \end{align}

  We define $\bfxi := \psi - \bfw = \eta^\qbar(\bfu-\bfz)-\bfw$, then
  $\divergence \bfxi=0$, which ensures that $\bfxi$ is a valid test
  function for~\eqref{eq:sysinhom}. We get
  \begin{align}
    \label{eq:APG}
    \begin{aligned}
      \skp{\bfA(\bfD\bfu) - \bfA(\bfP)}{\eta^\qbar(\bfD\bfu - \bfP)} &=
      \skp{\bfG-\bfG_0}{\eta^\qbar(\bfD\bfu - \bfP)} +
      \skp{\bfG-\bfG_0}{(\bfu-\bfz) \otimes_{sym} \nabla (\eta^\qbar)}
      \\
      &\quad - \skp{\bfA(\bfD \bfu) - \bfA(\bfP)}{(\bfu-\bfz)
        \otimes_{sym} \nabla (\eta^\qbar) }
      \\
      &\quad -\skp{\bfG-\bfG_0}{\bfD\bfw}+\skp{\bfA(\bfD\bfu) -
        \bfA(\bfP)}{\bfD\bfw}.
    \end{aligned}
  \end{align}
  The symbol $\otimes_{sym}$ denotes the symmetric part of $\otimes$,
  i.e.$(\bff\otimes_{sym}\bfg)_{ij}:=(\bff_i\bfg_j+\bff_j\bfg_i)/2$
  for $\bff,\bfg\in\setR^2$. We divide~\eqref{eq:APG} by~$\abs{2B}$
  and estimate the two sides.  Concerning the left hand side we find
  by Lemma~\ref{lem:hammer}
  \begin{align*}
    \abs{2B}^{-1}\skp{\bfA(\bfD\bfu) - \bfA(\bfP)}{\eta^\qbar(\bfD\bfu -
      \bfP)} \sim \dashint_{2B} \eta^\qbar\abs{\bfV(\bfD\bfu) -
      \bfV(\bfP)}^2dx =:(I).
  \end{align*}
  We estimate the right hand side of~\eqref{eq:APG} by Young's
  inequality \eqref{eq:young} for $\phi_{\abs{\bfP}}$ with $\delta \in
  (0,1)$ using also
  $(\phi_{\abs{\bfP}})^* \sim (\phi^*)_{\abs{\bfA(\bfP)}}$ (see
  Lemma~\ref{lem:shiftdual}).
  \begin{align*}
    \begin{aligned}
      (I)\leq &
      c_\delta\dashint_{2B}(\varphi^*)_{\abs{\bfA(\bfP)}}(\abs{\bfG-\bfG_0})dx
      +\delta\dashint_{2B}
      \eta^{\pbar\qbar}\varphi_{\abs{\bfP}}(\abs{\bfD\bfu-\bfP})dx
      \\
      &+c_\delta\dashint_{2B}\varphi_{\abs{\bfP}}
      \bigg(\frac{\abs{\bfu-\bfz}}{R}\bigg)dx
      +c_\delta\dashint_{2B}\varphi_{\abs{\bfP}}(\abs{\bfD\bfw})dx
      \\
      &+ \delta \dashint_{2B} \eta^{(\qbar-1)\overline{q}'}
      (\varphi^*)_{\abs{\bfA(\bfP)}}( \abs{\bfA(\bfD\bfu)-\bfA(\bfP)}
      \big)\, dx
      \\
      &=: (II)+(III)+(IV)+(V) + (VI).
    \end{aligned}
  \end{align*}
  Now we use Lemma~\ref{lem:hammer} to estimate $(III) + (VI) \leq
  \delta\,c (I)$, so these terms can be absorbed. Moreover,
  by~\eqref{est:Bog}
  \begin{align*}
    (IV) + (V) &\leq c\, (IV) + c\,\dashint_{2B}\varphi_{\abs{\bfP}}
    (\abs{\trace \bfP})\,dx.
  \end{align*}
  Since $\bfP$ is constant, $\trace \bfP=\divergence \bfz$ and
  $\divergence \bfu=0$, we can estimate
  \begin{align}
    \label{eq:Bog_P}
    \dashint_{2B}\varphi_{\abs{\bfP}} (\abs{\trace \bfP})\,dx =
    \bigg(\dashint_{2B} (\varphi_{\abs{\bfP}})^\theta
    (\abs{\divergence(\bfu - \bfz)})\,dx \bigg)^{\frac 1 \theta} \leq
    \bigg(\dashint_{2B} (\varphi_{\abs{\bfP}})^\theta (\abs{\bfD\bfu -
      \bfD\bfz)})\,dx \bigg)^{\frac 1 \theta}.
  \end{align}
  It remains to estimate (IV). We use Sobolev-\Poincare{} inequality
  of Theorem~\ref{thm:poincare} with $\psi = \phi_{\abs{\bfP}}$ such
  that $(\phi_\abs{\bfP})^\theta$ is almost convex and
  \begin{align*}
    (IV) =
    c\,\dashint_{2B}\varphi_{\abs{\bfP}}\bigg(\frac{\abs{\bfu-\bfz}}{R}\bigg)dx
    &\leq
    c\,\bigg(\dashint_{2B}\varphi_{\abs{\bfP}}^\theta({\abs{\nabla\bfu-\nabla\bfz}})dx
    \bigg)^{\frac{1}{\theta}}
  \end{align*}
  with $\theta\in(0,1)$. The constants and~$\theta$ are independent
  of~$\abs{\bfP}$, since the $\Delta_2(\set{\phi_a}_{a\geq 0})$ is
  bounded in terms of the characteristics of $\phi$.

  As $\mean{\bfW(\bfu-\bfz)}_{2B}=0$ we find by Korn's inequality
  (Lemma~\ref{lem:korn-anti}) with $\psi = \phi_{\abs{\bfP}}^\theta$
  (almost convex) and $\bfD\bfz=\bfP$ that
  \begin{align*}
    (IV) \leq
    c\,\bigg(\dashint_{2B}\varphi_{\abs{\bfP}}^\theta(\abs{\bfD\bfu-
      \bfD\bfz})dx \bigg)^{\frac{1}{\theta}}.
  \end{align*}
  The above estimates and Lemma~\ref{lem:hammer} show that
  \begin{align*}
    (IV)+(V) &\leq
    c\,\bigg(\dashint_{2B}\varphi_{\abs{\bfP}}^\theta(\abs{\bfD\bfu-
      \bfD\bfz})dx \bigg)^{\frac{1}{\theta}} \leq
    c\,\bigg(\dashint_{2B}\abs{\bfV(\bfD \bfu) -
      \bfV(\bfP)}^{2\theta}dx\, \bigg)^{ \frac 1\theta}.
  \end{align*}
  The lemma is proved.
\end{proof}
Lemma~\ref{lem:VPrev} allows to obtain exactly as in~\cite{bmophi} the
next corollary, compare with \cite[Corollary~3.5]{bmophi}.
\begin{corollary}
  \label{cor:VPL1}
  Let the assumptions of Lemma~\ref{lem:VPrev} be satisfied. Then for
  all $\bfP \in \setS$
  \begin{align*}
    \begin{aligned}
      \dashint_{B} \abs{\bfV(\bfD\bfu) - \bfV(\bfP)}^2 dx&\leq c\,
      (\varphi^*)_{\abs{\bfA(\bfP)}}\bigg(\dashint_{2B} \abs{\bfA(\bfD
        \bfu) - \bfA(\bfP)}\,dx\bigg)
      \\
      &\quad + c(\phi^*)_{\abs{\bfA(\bfP)}}(\norm{\bfG}_{BMO(2B)})
    \end{aligned}
  \end{align*}
  The constants only depend on the characteristics of $\phi$ and the
  constants in Assumption~\ref{ass:A}.
\end{corollary}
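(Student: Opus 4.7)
The plan is to derive Corollary~\ref{cor:VPL1} directly from Lemma~\ref{lem:VPrev} by choosing $\bfG_0 := \mean{\bfG}_{2B}$ and then converting each of the two terms on the right-hand side into the desired Orlicz form, exactly paralleling \cite[Corollary~3.5]{bmophi}. The first term requires a reverse Jensen step for an almost concave power of the shifted dual N-function, and the second term requires a John--Nirenberg type estimate combined with uniform $\Delta_2$.

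For the first term, I start from the bound in Lemma~\ref{lem:VPrev} and apply the equivalence $\abs{\bfV(\bfP)-\bfV(\bfQ)}^2 \sim (\phi^*)_{\abs{\bfA(\bfP)}}(\abs{\bfA(\bfP)-\bfA(\bfQ)})$ from Lemma~\ref{lem:hammer} with $\bfQ=\bfD\bfu$. This turns
\[
\left(\dashint_{2B}\abs{\bfV(\bfD\bfu)-\bfV(\bfP)}^{2\theta}\,dx\right)^{1/\theta}
\]
into the same expression with integrand $\big[(\phi^*)_{\abs{\bfA(\bfP)}}(\abs{\bfA(\bfD\bfu)-\bfA(\bfP)})\big]^\theta$. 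By Lemma~\ref{lem:phi1A} the shifted function $(\phi^*)_{\abs{\bfA(\bfP)}}$ is of type $T(\qbar',\pbar',K)$ with constants uniform in $\bfP$. Choosing $\theta\in(0,1)$ small enough that $\theta\,\pbar' \le 1$ (and no larger than the $\theta$ of Lemma~\ref{lem:VPrev}, which is always admissible by Jensen), the function $(\phi^*)_{\abs{\bfA(\bfP)}}^\theta$ becomes almost concave. Jensen's inequality for almost concave functions then yields
\[
\dashint_{2B}\big[(\phi^*)_{\abs{\bfA(\bfP)}}(\abs{\bfA(\bfD\bfu)-\bfA(\bfP)})\big]^\theta\,dx
\le c\,(\phi^*)_{\abs{\bfA(\bfP)}}^\theta\!\left(\dashint_{2B}\abs{\bfA(\bfD\bfu)-\bfA(\bfP)}\,dx\right),
\]
and raising to the $1/\theta$ power produces the first term of the corollary.

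For the second term, $\dashint_{2B}(\phi^*)_{\abs{\bfA(\bfP)}}(\abs{\bfG-\mean{\bfG}_{2B}})\,dx$, the standard John--Nirenberg inequality gives that $\abs{\bfG-\mean{\bfG}_{2B}}/\norm{\bfG}_{\setBMO(2B)}$ is exponentially integrable on $2B$ with universal constants. Since $(\phi^*)_{\abs{\bfA(\bfP)}}$ has $\Delta_2$-constant bounded uniformly in $\bfP$ (Lemma~\ref{lem:phi1A}), exponential integrability is far stronger than what is needed: one obtains
\[
\dashint_{2B}(\phi^*)_{\abs{\bfA(\bfP)}}(\abs{\bfG-\mean{\bfG}_{2B}})\,dx
\le c\,(\phi^*)_{\abs{\bfA(\bfP)}}(\norm{\bfG}_{\setBMO(2B)}),
\]
which is exactly the second term in the claimed bound.

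The main obstacle — and the reason the statement is nontrivial despite following from Lemma~\ref{lem:VPrev} in a few lines — is the \emph{uniformity} of all constants (the $\Delta_2$-constants of the shift, the almost concavity constant of $(\phi^*)_{\abs{\bfA(\bfP)}}^\theta$, and the equivalence constants of Lemma~\ref{lem:hammer}) with respect to the parameter $\abs{\bfA(\bfP)}$. This uniformity is precisely what Lemma~\ref{lem:phi1A} supplies, since the type $T(\qbar',\pbar',K)$ of $(\phi^*)_{\abs{\bfA(\bfP)}}$ depends only on the characteristics of $\phi$ and not on $\bfP$; without this, the constant $c$ in the corollary could a priori blow up as $\abs{\bfA(\bfP)}\to\infty$ or $\to 0$.
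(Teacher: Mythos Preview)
Your approach is the one the paper intends: it defers entirely to \cite[Corollary~3.5]{bmophi}, and your outline (take $\bfG_0=\mean{\bfG}_{2B}$, convert via Lemma~\ref{lem:hammer}, apply Jensen for an almost concave power of the shifted dual, and finish the $\bfG$-term with John--Nirenberg plus uniform $\Delta_2$) is exactly that argument.

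One justification is misstated, however. You write that taking $\theta$ smaller than the exponent $\theta_0$ produced by Lemma~\ref{lem:VPrev} is ``always admissible by Jensen''. Jensen goes the other way: for $\theta<\theta_0$ one has $\big(\dashint f^{2\theta}\big)^{1/\theta}\le \big(\dashint f^{2\theta_0}\big)^{1/\theta_0}$, so replacing $\theta_0$ by a smaller $\theta$ on the right-hand side of Lemma~\ref{lem:VPrev} \emph{strengthens} the inequality and cannot be read off directly. This matters because the $\theta_0$ coming from the Sobolev--Poincar\'e/Korn step satisfies $\theta_0\ge 1/\pbar$, whereas almost concavity of $((\phi^*)_{|\bfA(\bfP)|})^{\theta}$ needs $\theta\le 1/\pbar'$; for $\pbar<2$ these ranges are disjoint.

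The fix is the standard downward self-improvement of reverse H\"older inequalities: since the estimate of Lemma~\ref{lem:VPrev} holds for \emph{all} balls $B$ with $2B\subset\Omega$, an iteration/covering (or $A_\infty$-type) argument upgrades it to the same inequality with any exponent $\theta\in(0,\theta_0]$, at the cost of enlarging the constant. With $\theta\le 1/\pbar'$ thus available, your Jensen step for the almost concave $((\phi^*)_{|\bfA(\bfP)|})^{\theta}$ and the John--Nirenberg step go through exactly as you wrote, and the uniformity in $\bfP$ is indeed supplied by Lemma~\ref{lem:phi1A}.
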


\subsection{Approximation property}

Let $\bfu$ be a local weak solution of \eqref{eq:sysinhom} and $B$ be
a ball satisfying $2B\subset\Omega$. We consider a solution $\bfh,\rho$ of
the homogeneous problem
\begin{align}
  \begin{alignedat}{2}
    \label{eq:syshom}
    -\divergence \bfA(\bfD \bfh)+ \nabla \rho &= 0
    &&\quad\text{in $\Omega$},
    \\
    \divergence \bfu&=0 &&\quad\text{in $\Omega$},
    \\
    \bfh&=\bfu &&\quad\text{on $\partial\Omega$}.
  \end{alignedat}
\end{align}
The next lemma estimates the natural distance between $\bfu$ and its
approximation $\bfh$.
\begin{lemma}
  \label{lem:choice}
  For every $\delta>0$ there exists $c_\delta \geq 1$ such that
  \begin{align*}
    \dashint_B \abs{\bfV(\bfD \bfu) - \bfV(\bfD \bfh)}^2\,dx \leq
    \delta\,
    &(\varphi^*)_{\abs{\mean{\bfA(\bfD\bfu)}_{2B}}}\bigg(\dashint_{2B}
    \abs{\bfA(\bfD \bfu) - \mean{\bfA(\bfD\bfu)}_{2B}}dx\bigg)
    \\
    \quad + c_\delta
    &(\phi^*)_{\abs{\mean{\bfA(\bfD\bfu)}_{2B}}}(\norm{\bfG}_{BMO(2B)})
  \end{align*}
  holds. The constants depend only on the characteristics of $\varphi$
  and the constants in Assumption~\ref{ass:A}.
\end{lemma}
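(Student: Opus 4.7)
The plan is to combine the energy identity for $\bfu - \bfh$ with a shifted Young's inequality and a shift-change to the constant shift $|M| := |\mean{\bfA(\bfD\bfu)}_{2B}|$, closing the argument with Corollary~\ref{cor:VPL1}. First I would test both weak formulations against $\bfxi := \bfu - \bfh \in W^{1,\phi}_{0,\divergence}(2B)$ (admissible since $\bfu = \bfh$ on $\partial(2B)$ and both are divergence-free) and subtract. Using $\int_{2B}\bfD(\bfu - \bfh)\,dx = 0$ to insert $-\mean{\bfG}_{2B}$, this yields
$$\dashint_{2B}\bigl(\bfA(\bfD\bfu) - \bfA(\bfD\bfh)\bigr):\bfD(\bfu - \bfh)\,dx = \dashint_{2B}\bigl(\bfG - \mean{\bfG}_{2B}\bigr):\bfD(\bfu - \bfh)\,dx.$$
Lemma~\ref{lem:hammer} identifies the left-hand side with $\dashint_{2B}|\bfV(\bfD\bfu) - \bfV(\bfD\bfh)|^2\,dx$ up to multiplicative constants, which in turn controls $\dashint_B$ of the same integrand by a factor of four.

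Next I would bound the right-hand side via the shifted Young's inequality~\eqref{eq:young} with shift $|\bfD\bfu|$: by Lemma~\ref{lem:hammer} the primal term $\phi_{|\bfD\bfu|}(|\bfD\bfu - \bfD\bfh|) \sim |\bfV(\bfD\bfu) - \bfV(\bfD\bfh)|^2$ can be absorbed into the LHS for a small prefactor, and by Lemma~\ref{lem:shiftdual} the dual term reads $(\phi^*)_{|\bfA(\bfD\bfu)|}$. This leads to
$$\dashint_B|\bfV(\bfD\bfu) - \bfV(\bfD\bfh)|^2\,dx \lesssim \dashint_{2B}(\phi^*)_{|\bfA(\bfD\bfu)|}\bigl(|\bfG - \mean{\bfG}_{2B}|\bigr)\,dx.$$

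Finally, the standard shift-change inequality for shifted $N$-functions (cf.~\cite{DieE08}) replaces the $x$-dependent shift $|\bfA(\bfD\bfu)|$ by the constant $|M|$, producing an additional error $\sigma(\phi^*)_{|M|}(|\bfA(\bfD\bfu) - M|)$. A John--Nirenberg-type estimate, exploiting the polynomial growth of $(\phi^*)_{|M|}$ from Lemma~\ref{lem:phi1A}, turns the $\bfG$-term into $c_\delta(\phi^*)_{|M|}(\norm{\bfG}_{\setBMO(2B)})$. The residual term equals $\sigma\dashint_{2B}|\bfV(\bfD\bfu) - \bfV(\bfP_0)|^2\,dx$ (for any $\bfP_0$ with $\bfA(\bfP_0) = M$) by Lemma~\ref{lem:hammer}, and Corollary~\ref{cor:VPL1} applied to $\bfu$ with $\bfP = \bfP_0$, combined with the reverse-H\"older structure from Lemma~\ref{lem:VPrev}, yields precisely the desired $\delta(\phi^*)_{|M|}(\dashint_{2B}|\bfA(\bfD\bfu) - M|\,dx)$ form for $\sigma$ chosen suitably small.

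\textbf{Main obstacle.} The delicate point is converting the residual $\sigma\dashint_{2B}(\phi^*)_{|M|}(|\bfA(\bfD\bfu) - M|)\,dx$ — which by convex Jensen is inherently \emph{larger} than $\sigma(\phi^*)_{|M|}(\dashint|\bfA(\bfD\bfu) - M|)$ — into the $(\phi^*)_{|M|}(\dashint\cdot)$ form on the right-hand side of the claim. This relies on the Jensen-concavity trick applied to a power $((\phi^*)_{|M|})^\theta$, which is almost concave for sufficiently small $\theta > 0$ by Lemma~\ref{lem:phi1A}, together with the reverse-H\"older inequality from Lemma~\ref{lem:VPrev}, mirroring the passage from Lemma~\ref{lem:VPrev} to Corollary~\ref{cor:VPL1}.
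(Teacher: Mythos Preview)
Your proposal is correct and follows essentially the same approach as the paper, which simply cites \cite[Lemma~4.2]{bmophi}: test the difference of the equations with $\bfu-\bfh$, apply shifted Young's inequality, perform a shift-change to the constant $\abs{\mean{\bfA(\bfD\bfu)}_{2B}}$, and control the residual via Corollary~\ref{cor:VPL1}. One minor caveat: the comparison solution $\bfh$ is taken on $B$ (with $\bfh=\bfu$ on $\partial B$), not on $2B$ as you wrote, so that Corollary~\ref{cor:VPL1} applies directly on the pair $(B,2B)$ without needing $4B\subset\Omega$.
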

\begin{proof}
  The estimate is obtained by testing the difference of the equations
  for $\bfu$ and $\bfh$ by $\bfu-\bfh.$ The proof is exactly as
  in~\cite[Lemma~4.2]{bmophi}. One just needs to replace the
  gradient by the symmetric gradient.
\end{proof}

\subsection{Decay estimate}
\label{ssec:decay}

In this section we derive decay estimates for our approximation~$\bfh$.
The main ingredient is the following theorem which can be found in
\cite[Theorem~3.6]{DieKap12}. It is valid in any dimension but needs
$\phi''$ to be almost monotone. This is the only place in the paper,
where we need this assumption on~$\phi''$. 

\begin{theorem}
  \label{thm:diekap}
  Let $\phi''$ be almost monotone. If $\bfh$ is a weak solution
  of~\eqref{eq:syshom}, then there is an $r>2$ such that for every
  ball $Q\subset B$ with radius $R>0$
  \begin{align*}
    R^2\bigg(\dashint_{\frac12 Q} \abs{\nabla\bfV(\bfD\bfh)}^rdx\bigg)^{\frac2r}
    \leq 
    &\,C \dashint_{Q} \abs{\bfV(\bfD\bfh)  - \mean{\bfV(\bfD
        \bfh)}_{Q}}^2dx.
  \end{align*}
  The constants $C$ and $r$ depend only on the characteristics of
  $\varphi$ and the constants in Assumption~\ref{ass:A}.
\end{theorem}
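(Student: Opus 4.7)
The plan is to prove the higher-integrability estimate via three standard steps: a Caccioppoli inequality for $\bfV(\bfD\bfh)$, a reverse H\"older inequality obtained by combining the Caccioppoli estimate with the Sobolev--Poincar\'e inequality, and Gehring's self-improving lemma.

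First I would establish a Caccioppoli-type estimate of the form
\[
\int_{Q'} |\nabla \bfV(\bfD \bfh)|^2 \eta^2 \, dx \leq C \int_Q |\bfV(\bfD\bfh) - \bfV(\bfP)|^2 |\nabla \eta|^2 \, dx
\]
for any concentric sub-ball $Q' \subset Q$, any standard cutoff $\eta$ supported in $Q$ with $\eta = 1$ on $Q'$, and any constant $\bfP \in \setS$. The argument uses finite differences: let $\bfz$ be affine with $\bfD\bfz = \bfP$ and $\mean{\bfW \bfz}_Q = \mean{\bfW\bfh}_Q$, and test the equation for $\bfh - \bfz$ by $\tau_{-s,k}(\eta^2 \tau_{s,k}(\bfh - \bfz)) - \bfw_s$, where $\bfw_s$ is a \Bogovskii{} correction restoring the divergence-free constraint, just as in the proof of Lemma~\ref{lem:VPrev}. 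The monotonicity relations from Lemma~\ref{lem:hammer} convert the principal term into an expression comparable to $\eta^2 |\tau_{s,k} \bfV(\bfD\bfh)|^2$, while the error terms are absorbed via Young's inequality~\eqref{eq:young} in the shifted N-function $\phi_{|\bfP|}$. Here the almost monotonicity of $\phi''$ is used to compare the shifted N-functions $\phi_{|\bfD\bfh(x)|}$ at different points of the ball; letting $s \to 0$ yields the asserted inequality.

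Next I would choose $\bfP \in \setS$ so that $\bfV(\bfP) = \mean{\bfV(\bfD\bfh)}_Q$ (possible since $\bfV$ is a bijection on $\setS$ under Assumption~\ref{ass:phi}) and apply the Sobolev--Poincar\'e inequality (Theorem~\ref{thm:poincare}) with $\psi(t)=t^2$ to $\bfV(\bfD\bfh)$. Combined with the Caccioppoli estimate this produces the reverse H\"older inequality
\[
\dashint_{Q/2} |\nabla \bfV(\bfD\bfh)|^2\, dx \leq \frac{C}{R^2} \dashint_Q |\bfV(\bfD\bfh) - \mean{\bfV(\bfD\bfh)}_Q|^2\, dx \leq C \bigg(\dashint_Q |\nabla \bfV(\bfD\bfh)|^{2\theta}\, dx\bigg)^{1/\theta}
\]
for some $\theta \in (0,1)$. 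Running the same estimate on all pairs of concentric sub-balls and applying Gehring's self-improving lemma yields some $r > 2$ with
\[
\bigg(\dashint_{Q/2} |\nabla \bfV(\bfD\bfh)|^r\, dx\bigg)^{1/r} \leq C \bigg(\dashint_Q |\nabla \bfV(\bfD\bfh)|^2\, dx\bigg)^{1/2}.
\]
One more application of the Caccioppoli estimate replaces the right-hand side by $R^{-1}\big(\dashint_Q |\bfV(\bfD\bfh) - \mean{\bfV(\bfD\bfh)}_Q|^2\,dx\big)^{1/2}$, which after squaring and multiplication by $R^2$ is exactly the claim.

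The main obstacle is Step 1. The finite-difference argument in the Stokes setting is delicate for three reasons: (i)~the pressure forces divergence-free test functions, hence the need for the \Bogovskii{} correction and an accompanying Orlicz-norm estimate as in Lemma~\ref{lem:VPrev}; (ii)~the ellipticity lives in the shifted N-functions $\phi_a$, whose comparability across a ball rests essentially on the almost monotonicity of $\phi''$ (precisely why this hypothesis appears in the theorem); and (iii)~the estimate must be independent of $\bfP$ so that Step~2 can later choose $\bfP$ matching $\mean{\bfV(\bfD\bfh)}_Q$. Once Step~1 is in hand, Steps 2 and~3 are routine.
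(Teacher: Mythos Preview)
The paper does not actually prove this theorem; it simply quotes it from \cite[Theorem~3.6]{DieKap12} (Diening--Kaplick\'y, $L^q$ theory for a generalized Stokes system). So there is no ``paper's own proof'' to compare against here. Your three-step outline --- Caccioppoli inequality for $\bfV(\bfD\bfh)$ via finite differences with a \Bogovskii{} correction, reverse H\"older from Sobolev--Poincar\'e, then Gehring --- is exactly the standard route and is in essence what is carried out in the cited reference.

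Two minor remarks on the sketch. First, in Step~1 the subtraction of the affine map $\bfz$ inside the finite difference is largely cosmetic: since $\bfz$ is affine, $\tau_{s,k}\bfz$ is constant and contributes only lower-order terms; the usual test function is simply $\tau_{-s,k}(\eta^{\qbar}\tau_{s,k}\bfh)$ corrected by \Bogovskii{}. Second, in Step~3 the ball radii do not quite line up: Gehring on sub-balls of $Q$ followed by one more Caccioppoli gives the estimate on, say, $\tfrac14 Q$ in terms of $Q$ rather than on $\tfrac12 Q$; the stated inequality on $\tfrac12 Q$ then follows by a routine covering argument. Your description of where almost monotonicity enters is a bit imprecise but points in the right direction: in the cited paper it is used in the difference-quotient Caccioppoli step to obtain the pointwise equivalence $|\nabla \bfV(\bfD\bfh)|^2 \sim \phi''(|\bfD\bfh|)\,|\nabla \bfD\bfh|^2$ and to control the resulting cross terms.
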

The regularity $\bfV \in W^{1,r}$ with $r>2$ ensures in two space
dimensions that $\bfV$ is H{\"o}lder continuous. This is the reason, why our estimates can only be applied to planar flows. It is an open
question if $\bfV(\nabla \bfu)$ is H{\"o}lder continuous in higher
dimensions. 

This provides the following decay estimates in the plane:
\begin{theorem}
  \label{thm:decay}
  There exists $\gamma>0$ such that for every $\lambda \in
  (0,1]$
  \begin{align*}
    \dashint_{\lambda B} \abs{\bfV(\bfD\bfh)  - \mean{\bfV(\bfD
        \bfh)}_{\lambda B}}^2dx&
    \leq {C}{\lambda^{2\gamma}}
    \dashint_{B} \abs{\bfV(\bfD\bfh)  - \mean{\bfV(\bfD
        \bfh)}_{B}}^2dx.
  \end{align*}
  The constant $C$ and $\gamma$ depend only on the characteristics of
  $\varphi$ and the constants in Assumption~\ref{ass:A}.
\end{theorem}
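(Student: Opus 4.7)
The plan is to combine the higher integrability of $\nabla\bfV(\bfD\bfh)$ provided by Theorem~\ref{thm:diekap} with Morrey's embedding, whose applicability for $W^{1,r}$ with $r>2$ is exactly what restricts the argument to the planar case. Write $f:=\bfV(\bfD\bfh)$, let $r>2$ be as in Theorem~\ref{thm:diekap}, and set $\gamma:=1-2/r\in(0,1)$. This $\gamma$ will be the decay exponent.

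First I would apply Theorem~\ref{thm:diekap} with $Q=B$ (radius $R$) to obtain
\begin{align*}
R^2\bigg(\dashint_{\frac12 B}|\nabla f|^r\,dx\bigg)^{2/r}\leq C\dashint_B|f-\mean{f}_B|^2\,dx.
\end{align*}
For $\lambda\in[\frac12,1]$ the decay is immediate: since the mean minimizes the $L^2$ deviation among constants and $|B|/|\lambda B|=\lambda^{-2}\leq 4$,
\begin{align*}
\dashint_{\lambda B}|f-\mean{f}_{\lambda B}|^2\,dx\leq \lambda^{-2}\dashint_B|f-\mean{f}_B|^2\,dx\leq 4\cdot 2^{2\gamma}\lambda^{2\gamma}\dashint_B|f-\mean{f}_B|^2\,dx.
\end{align*}

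For $\lambda\in(0,\frac12)$ one has $\lambda B\subset\frac12 B$. Since $r>2=n$, Morrey's inequality on the ball $\lambda B$ of radius $\lambda R$ gives, for any $x,y\in\lambda B$,
\begin{align*}
|f(x)-f(y)|^2\leq C(\lambda R)^{2\gamma}\bigg(\int_{\lambda B}|\nabla f|^r\,dx\bigg)^{2/r}\leq C\lambda^{2\gamma}R^2\bigg(\dashint_{\frac12 B}|\nabla f|^r\,dx\bigg)^{2/r},
\end{align*}
where the second inequality uses $\lambda B\subset\frac12 B$, expands $\int_{\frac12 B}=|\frac12 B|\cdot\dashint_{\frac12 B}$ with $|\frac12 B|^{2/r}\sim R^{4/r}$, and the arithmetic identity $2\gamma+4/r=2$. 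Taking the supremum over $x,y\in\lambda B$ on the left, noting that this supremum dominates $\dashint_{\lambda B}|f-\mean{f}_{\lambda B}|^2\,dx$ (again by the best $L^2$-approximation property of the mean), and invoking the first display on the right yields the claim.

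The only structural requirement is $r>n$, needed for Morrey to kick in. In two dimensions this is delivered directly by Theorem~\ref{thm:diekap}, which is the deep input doing all the actual work. In dimensions $n\geq 3$ the argument collapses without a substantially stronger integrability exponent on $\nabla\bfV(\bfD\bfh)$—this is precisely the open problem flagged in the introduction.
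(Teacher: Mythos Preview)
Your proof is correct and follows essentially the same strategy as the paper: both use Theorem~\ref{thm:diekap} as the sole deep input and extract the decay exponent $\gamma=1-2/r$ from the $W^{1,r}$ regularity of $\bfV(\bfD\bfh)$ with $r>2=n$. The only cosmetic difference is that the paper phrases the embedding step as Poincar\'e on $\lambda B$ followed by Jensen (to pass from the $L^2$ to the $L^r$ average of the gradient) before enlarging the domain, whereas you invoke Morrey's inequality directly for a pointwise oscillation bound; the arithmetic and the resulting $\gamma$ are identical.
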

\begin{proof}
  The result is clear if $\lambda \geq \frac12$, so we can assume
  $\lambda\in(0,\frac 12)$.  Let $R$ denote the radius of~$B$.  We
  compute by \Poincare inequality on~$\lambda B$, Jensen's inequality
  with $r>2$, enlarging the domain of integration and
  Theorem~\ref{thm:diekap}
  \begin{align*}
    \dashint_{\lambda B} &\abs{\bfV(\bfD\bfh) - \mean{\bfV(\bfD
        \bfh)}_{\lambda B}}^2 dx\leq C (\lambda R)^2 \dashint_{\lambda
      B} \abs{\nabla\bfV(\bfD\bfh)}^2 dx
    \\
    \leq &\,C (\lambda R)^2 \bigg( \dashint_{\lambda B}
    \abs{\nabla\bfV(\bfD\bfh)}^rdx\bigg)^\frac 2r \leq C
    R^2\lambda^{2(1-\frac 2r)} \bigg(\dashint_{\frac12 B}
    \abs{\nabla\bfV(\bfD\bfh)}^rdx\bigg)^{\frac2r}
    \\
    \leq &\,C \lambda^{2(1-\frac 2r)} \dashint_{B} \abs{\bfV(\bfD\bfh)
      - \mean{\bfV(\bfD \bfh)}_{B}}^2dx.
  \end{align*}
  As $r>2$ the proof is completed.
\end{proof}

\subsection{Proof of the main theorem}\label{mt}

Theorem~\ref{thm:main} is a corollary of the following more general
theorem.
\begin{theorem}
  \label{thm:BMOomega}
  Let $B \subset \setR$ be a ball. Let $\bfu$, $\pi$ be a local weak
  solution of \eqref{eq:sysinhom} on $2B$, with $\phi$ and $\bfA$
  satisfying Assumption~\ref{ass:A}.  Let $\omega \,:\, (0,\infty) \to
  (0,+\infty)$ be non-decreasing such that for some $\beta \in
  (0,\frac{2\gamma}{\pbar'})$ the function $\omega(r) r^{-\beta}$ is
  almost decreasing, where $\gamma$ is defined in
  Theorem~\ref{thm:decay} and $\pbar$ in~\eqref{eq:bar}.  Then
  \begin{align}\label{est:bmo}
    \norm{\pi}_{\setBMO_\omega(B)}
    +\norm{\bfA(\bfD\bfu)}_{\setBMO_\omega(B)} &\leq c\,
    M_{\omega,2B}^\sharp (\bfA(\bfD\bfu)) + c
    \norm{\bfG}_{\setBMO_\omega(2B)}.
  \end{align}
  The constants depend only on the characteristics of $\varphi$ and
  the constants in Assumption~\ref{ass:A}.
\end{theorem}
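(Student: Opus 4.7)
The plan is to combine the three ingredients established in the preceding subsections — the reverse H{\"o}lder inequality (Corollary~\ref{cor:VPL1}), the approximation estimate (Lemma~\ref{lem:choice}), and the excess-decay estimate for the homogeneous problem (Theorem~\ref{thm:decay}) — into a Campanato-type iteration, following the scheme of~\cite{bmophi} adapted to the symmetric-gradient setting and with the pressure treated separately at the end.

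\emph{One-step excess decay.} I would fix a ball $Q$ with $2Q \subset 2B$ and a parameter $\lambda \in (0,\tfrac12]$, let $\bfh$ solve the homogeneous problem~\eqref{eq:syshom} on $2Q$ with $\bfh = \bfu$ on $\partial(2Q)$, and pick a symmetric $\bfP$ with $\bfA(\bfP) = \mean{\bfA(\bfD\bfu)}_{2Q}$. Writing
\begin{align*}
\bfV(\bfD\bfu) - \mean{\bfV(\bfD\bfu)}_{\lambda Q} = \bigl(\bfV(\bfD\bfu)-\bfV(\bfD\bfh)\bigr) + \bigl(\bfV(\bfD\bfh)-\mean{\bfV(\bfD\bfh)}_{\lambda Q}\bigr) + \mean{\bfV(\bfD\bfh)-\bfV(\bfD\bfu)}_{\lambda Q},
\end{align*}
taking the $L^2$-triangle inequality on $\lambda Q$, applying Theorem~\ref{thm:decay} to the middle term to gain the factor $\lambda^{2\gamma}$, and controlling the other two by Lemma~\ref{lem:choice} with a small parameter $\delta$ combined with Corollary~\ref{cor:VPL1}, one arrives at an excess-decay estimate on $\lambda Q$. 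The choice $\delta \sim \lambda^{2\gamma+2}$ absorbs the unfavourable factor $\lambda^{-2}$ produced by the ratio of volumes $|2Q|/|\lambda Q|$ when passing from an integral over $\lambda Q$ to one over $2Q$.

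\emph{Transfer to $\bfA$-oscillations and iteration.} Using the equivalence $|\bfV(\bfD\bfu)-\bfV(\bfP)|^2 \sim (\phi^*)_{|\bfA(\bfP)|}(|\bfA(\bfD\bfu)-\bfA(\bfP)|)$ of Lemma~\ref{lem:hammer}, and inverting the shifted N-function $(\phi^*)_{|\bfA(\bfP)|}$ via Jensen's inequality together with its type properties from Lemma~\ref{lem:phi1A}, the decay exponent $2\gamma$ becomes $2\gamma/\pbar'$; this is the arithmetic source of the restriction $\beta \in (0, 2\gamma/\pbar')$. The outcome is a one-step Campanato inequality of the form
\begin{align*}
M^\sharp_{\lambda Q}(\bfA(\bfD\bfu)) \leq C\lambda^{2\gamma/\pbar'}\, M^\sharp_{2Q}(\bfA(\bfD\bfu)) + C\,\omega(R_{2Q})\,\norm{\bfG}_{\setBMO_\omega(2B)},
\end{align*}
with $C$ depending only on the characteristics of $\phi$. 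Fixing $\lambda$ once for all so that $C\lambda^{2\gamma/\pbar'} \leq \tfrac12\lambda^\beta$ and iterating along the sequence $(\lambda^k Q)_{k\geq 0}$, the almost-decreasing property of $r\mapsto \omega(r)r^{-\beta}$ turns the error sum into a geometric series and yields a uniform bound on $M^\sharp_{\lambda^k Q}(\bfA(\bfD\bfu))/\omega(\lambda^k R_Q)$; taking the supremum over $k$ and over all balls $Q \subset B$ gives the claimed $\setBMO_\omega$-estimate for $\bfA(\bfD\bfu)$.

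\emph{Pressure and main obstacle.} To recover $\pi$, I would pair~\eqref{eq:sysinhom-press} with $\bfxi = \Bog(\eta - \mean{\eta}_Q)$ for $\eta \in C^\infty_0(Q)$, using the $L^r$-boundedness of the Bogovskii operator from~\cite{MR82b:35135}. After subtracting the constants $\mean{\bfA(\bfD\bfu)}_Q$ and $\mean{\bfG}_Q$ on the right-hand side, this yields $\dashint_Q|\pi-\mean{\pi}_Q|\,dx \lesssim \dashint_Q|\bfA(\bfD\bfu)-\mean{\bfA(\bfD\bfu)}_Q|\,dx + \dashint_Q|\bfG-\mean{\bfG}_Q|\,dx$, so the pressure estimate follows at once from that for $\bfA(\bfD\bfu)$. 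I expect the main technical obstacle to lie in the transfer step: keeping all implicit constants uniform in the local shift $|\bfA(\bfP)| = |\mean{\bfA(\bfD\bfu)}_{2Q}|$ as $Q$ varies requires a systematic use of Lemma~\ref{lem:phi1A} to guarantee that the $\Delta_2$-data and the indices $\pbar',\qbar'$ of $(\phi^*)_{|\bfA(\bfP)|}$ depend only on the characteristics of $\phi$, never on the shift itself.
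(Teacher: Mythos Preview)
Your argument for $\bfA(\bfD\bfu)$ is essentially the paper's: it too defers to the iteration scheme of~\cite{bmophi}, built on Corollary~\ref{cor:VPL1}, Lemma~\ref{lem:choice} and Theorem~\ref{thm:decay}, and omits the details.

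For the pressure, your dual use of the \Bogovskii{} operator is a legitimate alternative, but the displayed $L^1$--$L^1$ oscillation inequality does not follow from ``$L^r$-boundedness of $\Bog$'': to get $\dashint_Q|\pi-\mean{\pi}_Q|\,dx$ by duality you would need $\nabla\Bog(\eta-\mean{\eta}_Q)$ bounded in $L^\infty$, and $\Bog$ is \emph{not} bounded $L^\infty \to W^{1,\infty}$. The paper avoids this by working in $L^2$: it solves $\divergence\bfxi=\pi-\mean{\pi}_Q$ in $Q$ with $\bfxi\in W^{1,2}_0(Q)$, tests~\eqref{eq:sysinhom-press} with this $\bfxi$ to obtain $\norm{\pi-\mean{\pi}_Q}_{L^2(Q)}\leq c\,\norm{\bfH-\mean{\bfH}_Q}_{L^2(Q)}$ for $\bfH=\bfA(\bfD\bfu)-\bfG$, and then invokes the John--Nirenberg inequality to bound the right-hand side by $c\,\norm{\bfH}_{\setBMO(Q)}$. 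Your dual argument can be repaired the same way: run it in $L^2$ (or any $L^r$, $1<r<\infty$) to get the $L^r$--$L^r$ oscillation estimate, then use John--Nirenberg on $\bfH$; without that step the passage from $(\dashint_Q|\bfH-\mean{\bfH}_Q|^r)^{1/r}$ to $\norm{\bfH}_{\setBMO_\omega}$ is missing.
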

\begin{proof}
  The proof of the estimate of $\bfA(\bfD\bfu)$ follows line by line
  the proof of \cite[Theorem~5.3]{bmophi}. It is based on
  Corollary~\ref{cor:VPL1}, Lemma~\ref{lem:choice} and
  Theorem~\ref{thm:decay}. We will therefore omit the proof here and
  restrict ourselves to the additional estimates for the pressure.
 
  We define $\bfH=\bfA(\bfD\bfu)-\bfG$. It holds
  $\bfH\in\setBMO_\omega(B)\subset\setBMO(B)$.  We fix a ball
  $Q\subset B$. Then equation \eqref{eq:sysinhom} implies that
  \begin{align}\label{eq:press}
    \forall\bfxi\in W^{1,2}_0(\Omega):
    \skp{\pi-\mean{\pi}_Q}{\divergence \bfxi} =
    \skp{\bfH-\mean{\bfH}_Q}{\nabla \bfxi}.
  \end{align}
  Let $\bfxi \in W^{1,2}_0(Q)$ be the solution of the auxiliary
  problem
  \begin{align*}
    \divergence \bfxi=\pi-\mean{\pi}_Q\quad\mbox{in $Q$,}\qquad
    \bfxi=0\quad\mbox{on $\partial Q$.}
  \end{align*}
  The existence of such a solution is ensured by the \Bogovskii{}
  operator~\cite{MR82m:26014} and we have
  $\norm{\nabla\bfxi}_{L^2(Q)}\leq C\norm{\pi-\mean{\pi}_Q}_{L^2(Q)}$.
  The constant $C>0$ is independent of $Q$. Inserting such $\bfxi$
  into \eqref{eq:press} we get
  \begin{align*}
    \norm{\pi -\mean{\pi}_Q}_{L^2(Q)}^2 =
    \skp{\pi-\mean{\pi}_Q}{\divergence \xi} = \skp{\bfH -
      \mean{\bfH}_Q}{\nabla \xi}.
  \end{align*}
  This and $\norm{\nabla\bfxi}_{L^2(Q)}\leq
  C\norm{\pi-\mean{\pi}_Q}_{L^2(Q)}$ implies $ \norm{\pi
    -\mean{\pi}_Q}_{L^2(Q)} \leq c\, \norm{\bfH
    -\mean{\bfH}_Q}_{L^2(Q)}$.  We find by Jensen's inequality
  \begin{align*}
    \big(M^\sharp_{Q}\pi\big)^2\leq\dashint_Q\abs{\pi-\mean{\pi}_Q}^2dx\leq
    C \dashint_Q\abs{\bfH-\mean{\bfH}_Q}^2dx \leq 
    C\norm{\bfH}_{\setBMO(Q)}^2.
  \end{align*}
  In the last inequality we used the John-Nirenberg estimate,
  \cite[Corollary~6.12]{duoan}. It follows that $\pi\in\setBMO(B)$ and
  $\norm{\pi}_{\setBMO(Q)}\leq C\norm{\bfH}_{\setBMO(Q)}$. This
  implies that
  \begin{align*}
    M^\sharp_{\omega,Q}(\pi) \leq
    C\frac1{\omega(R_Q)}\norm{\bfH}_{\setBMO(Q)}\leq
    C\norm{\bfH}_{\setBMO_\omega(B)}
  \end{align*}
  using the monotonicity of~$\omega$. Since $Q$ is arbitrary, we have
  $\norm{\pi}_{\setBMO_\omega(B)} \leq
  \norm{\bfH}_{\setBMO_\omega(B)}$. Now $\bfH = \bfA(\bfD \bfu) -
  \bfG$ and the estimate for $\bfA(\bfD \bfu)$ conclude the proof.
\end{proof}
The choice $\omega(t)=1$ in Theorem~\ref{thm:BMOomega} gives the
$\setBMO$ estimate. However, the choice $\omega(t)=t^\beta$,
$\beta\in(0,2\gamma/\pbar')$ Theorem~\ref{thm:BMOomega} gives the
estimates in Campanato space $\camp^{1,2+\beta}$, compare
\cite[Corollary~5.5]{bmophi}. This is just Theorem~\ref{thm:main}.
\begin{remark}
  \label{rem:du-hol}
  It is possible to transfer the H{\"o}lder continuity of $\bfA(\bfD
  \bfu)$ to $\bfD \bfu$ and $\nabla \bfu$. Let us discuss the case of
  power-law and Carreau type fluids. This follows from the fact that
  $\bfA^{-1} \in C_{\loc}^{0,\sigma}$ for some $\sigma>0$. If $\kappa
  =0$, then $\sigma = \min \set{1, p'-1}$.  If $\kappa > 0$, then
  $\sigma=1$.  Now, $\bfA(\bfD \bfu) \in C^{0,\beta}$ implies $\bfD
  \bfu \in C^{0,\beta \sigma}$. Due to Korn's inequality we get
  $\nabla \bfu \in C^{0,\beta \sigma}$ as well.

\end{remark}

\begin{remark} 
  Note that if $\bfG\in \setVMO(2B)$ in Theorem~\ref{thm:BMOomega} we
  get that $\bfA(\bfD\bfu)\in\setVMO(B)$. Indeed, since $\bfG\in
  \setVMO(2B)$ there exists a nondecreasing function
  $\tilde{\omega}:(0,\infty)\to (0,\infty)$ with $\lim_{r \to 0}
  \tilde{\omega}(r)= 0$, such that $\norm{\bfG}_{\setBMO(B_r)}\leq
  \tilde{\omega}(r)$, for all $B_r\subset 2B$. Defining
  $\omega(r)=\min\{\tilde{\omega}(r),r^\frac{\alpha}{\pbar'}\}$ we
  obtain by Theorem~\ref{thm:BMOomega} the $\setBMO_\omega$-estimate
  for $\bfA(\bfD \bfu)$ and $\pi$, which imply that both are in
  $\setVMO$ (compare \cite[Corollary~5.4]{bmophi}).
\end{remark}
\begin{remark}\label{rem:rhsl2}
  Let us now assume that the right hand side of~\eqref{eq:sysinhom}
  is not given in divergence form $-\divergence \bfG$ with $\bfG$
  symmetric, but rather as~$\bff \in L^s$ with $s \geq 2$.

  Let $\bfw\in W^{2,s}(2B)\cap W^{1,s}_0(2B)$ and $\sigma\in
  W^{1,s}(2B)$ with $\mean{\sigma}_{2B}=0$ be the unique solution of
  the Stokes problem $-\divergence\bfD\bfw+\nabla\sigma=\bff$ and
  $\divergence\bfw=0$ in $2B$ with $\bfw =0$ on $\partial(2B)$. Then
  $\bff=-\divergence \bfG$ for $\bfG := \bfD\bfw - \sigma \identity$
  and $\bfG$ is symmetric.  If $s =2$, then $\bfG \in W^{1,2}(2B)
  \embedding \setVMO(2B)$. If $s >2$, then $\bfG \in W^{1,s}(2B)
  \embedding \mathcal{L}^{1,2+(1-\frac 2s)}(2B) = C^{0,1-\frac
    2s}(2B)$. In particular, Theorem~\ref{thm:BMOomega} is applicable
  and for all $s \geq 2$
  \begin{align*}
    \norm{\pi}_{\mathcal{L}^{1,2+\beta}(B)} +
    \norm{\bfA(\bfD\bfu)}_{\mathcal{L}^{1,2+\beta}(B)} \leq c\,
    R^{-\beta} M_{2B}^\sharp (\bfA(\bfD\bfu)) + c
    \norm{\bff}_{L^s(2B)}
  \end{align*}
  for $s \geq 2$ and $\beta \in (0,1-\frac 2s] \cap
  (0,\frac{2\gamma}{\pbar'})$. We additionally get $\setVMO$ estimates
  if $s=2$.

  The case $s=2$ is obviously the limiting one in this setting. In the
  case of the $p$-Laplacian, i.e. no symmetric gradient and no
  pressure, it has been proven in~\cite{CiaMaz11,DuzMin10Lip} that
  $\bff \in L^{n,1}(\setR^n)$ (Lorentz space; subspace of $L^n$)
  implies $\bfA(\nabla \bfu) \in L^\infty$.  It is an interesting open
  problem, if this also holds for the system with pressure and
  symmetric gradients (at least in the plane). Note that our results
  imply in this situation $\bfA(\bfD \bfu), \pi \in \setVMO$ for $n=2$.


\end{remark}

\section{An application to the stationary Navier-Stokes problem}
\label{sec:NS}

In this section we present an application of the previous results to
the generalized Navier-Stokes problem. We assume that $\bfu \in W^{1,
  \phi}(\Omega)$, $\divergence \bfu=0$ and $\pi \in
L^{\phi^*}(\Omega)$ are local weak solutions of the generalized
Navier-Stokes problem, in the sense that
\begin{align}
  \label{eq:sysinhomns}
  \forall\bfxi\in W^{1,\varphi}_0(\Omega):
  \skp{\bfA(\bfD \bfu)}{\bfD \bfxi} - \skp{\pi}{\divergence \bfxi} &=
  \skp{\bfG+\bfu\otimes\bfu}{\bfD \bfxi}
\end{align}
for a given mapping $\bfG: \Omega\to\setS.$

In order to handle the convective term we need the condition
\begin{align}\label{ass:conv1}
  \liminf_{s\to+\infty}\frac{\varphi(s)}{s^r}>0\quad\mbox{for some $r>\frac32$}.
\end{align}

We have the following result
\begin{theorem}
  \label{thm:genNS}
  Let $\phi$ and $\bfA$ satisfy Assumption~\ref{ass:A}
  and~\eqref{ass:conv1}. Let $\bfu$ be a local weak solution of
  \eqref{eq:sysinhomns} on $\Omega$.  Let $\beta \in
  (0,\frac{2\alpha}{\pbar'})$ ($\alpha$ is defined in
  Theorem~\ref{thm:decay} and $\pbar$ in Lemma~\ref{lem:phi1A}).  If
  $B$ is a ball with $2B\subset\Omega$ and $\bfG\in
  \camp^{1,2+\beta}(2B)$, then $\bfA(\bfD\bfu),\pi\in
  \camp^{1,2+\beta}(B)$.
\end{theorem}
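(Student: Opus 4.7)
The plan is to reduce the generalized Navier--Stokes problem to the Stokes setting of Theorem~\ref{thm:main} by absorbing the convective term into the right-hand side. Using $\divergence\bfu = 0$ one has the divergence-form identity $[\nabla\bfu]\bfu = \divergence(\bfu\otimes\bfu)$, and the tensor $\bfu\otimes\bfu$ is symmetric. Hence $(\bfu,\pi)$ is a local weak solution of~\eqref{eq:sysinhom} with datum $\tilde\bfG := \bfG + \bfu\otimes\bfu$, and Theorem~\ref{thm:main} will yield the conclusion as soon as $\bfu\otimes\bfu \in \camp^{1,2+\beta}(2B)$.

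To obtain this Campanato regularity I would bootstrap the regularity of $\bfu$. The growth hypothesis~\eqref{ass:conv1} furnishes $\phi(s) \gtrsim s^r$ for large $s$ with $r > 3/2$. Combined with the local finite energy $\int\phi(\abs{\bfD\bfu})\,dx<\infty$ and Korn's inequality (Lemma~\ref{lem:korn-anti}), this gives $\nabla\bfu \in L^r_{\loc}(\Omega)$. The two-dimensional Sobolev embedding then yields $\bfu \in L^{s_0}_{\loc}$ for some $s_0>6$, so $\bfu\otimes\bfu \in L^{s_0/2}_{\loc}$ with $s_0/2>3$. Since $\bfG \in \camp^{1,2+\beta}(2B) \subset L^\infty(2B)$, the datum $\tilde\bfG$ lies in $L^{s_0/2}_{\loc}$.

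Next I would iterate the nonlinear $L^s$-Calder\'on--Zygmund estimates for the generalized Stokes system from~\cite{DieKap12} (which in the planar case permit $s$ arbitrarily large), together with Sobolev embedding and the pointwise bound $\abs{\nabla(\bfu\otimes\bfu)} \leq 2\abs{\bfu}\abs{\nabla\bfu}$. Each iteration upgrades the integrability of $\bfA(\bfD\bfu)$, hence (by the growth of $\bfA$) of $\nabla\bfu$, and hence of $\bfu\otimes\bfu$; after finitely many steps $\nabla\bfu \in L^q_{\loc}$ for some $q>2$. Morrey's embedding then gives $\bfu \in L^\infty_{\loc}$ and $\bfu\otimes\bfu \in W^{1,q}_{\loc} \subset \camp^{1,2+(1-2/q)}_{\loc}$. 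Applying Theorem~\ref{thm:main} to the reformulated system with this $\tilde\bfG$ produces $\bfA(\bfD\bfu) \in \camp^{1,2+\beta_0}_{\loc}$ for some $\beta_0>0$, and Remark~\ref{rem:du-hol} promotes this to $\nabla\bfu \in C^{0,\sigma\beta_0}_{\loc}$, hence $\bfu \in C^{1,\sigma\beta_0}_{\loc}$.

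Consequently $\bfu\otimes\bfu \in C^{0,1}_{\loc} \subset \camp^{1,2+\beta}_{\loc}$ for every $\beta \in (0,1]$, and a final application of Theorem~\ref{thm:main} with $\tilde\bfG = \bfG + \bfu\otimes\bfu \in \camp^{1,2+\beta}(2B)$ concludes the proof. The main obstacle is the bookkeeping of the bootstrap: one must align the $L^s$-mapping properties of $\bfA$ (controlled by the shifted N-function framework and the upper/lower indices $p\leq q$ of $\phi$) with the planar Sobolev embedding so that each iteration yields a genuine gain of integrability for $\bfu\otimes\bfu$. The threshold $r>3/2$ in~\eqref{ass:conv1} is precisely what makes $\bfu\otimes\bfu$ land in some $L^q_{\loc}$ with $q>2$ already at the first step, providing the seed for the whole iteration.
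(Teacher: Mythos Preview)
Your proposal is correct and follows the same strategy as the paper: absorb the convective term into the datum as $\divergence(\bfu\otimes\bfu)$ and apply Theorem~\ref{thm:main} once $\bfu\otimes\bfu$ has enough Campanato regularity. The paper's proof is much shorter because it invokes \cite[Remark~5.3]{DieKap12} (valid precisely under~\eqref{ass:conv1}) to obtain $\bfD\bfu\in L^q(\tfrac32 B)$ for every $q>1$ in one stroke---this reference already carries out the higher-integrability bootstrap you sketch---after which Korn's inequality and the planar Sobolev embedding give $\bfu\otimes\bfu\in\camp^{1,2+\beta}(\tfrac32 B)$ directly, so a single application of Theorem~\ref{thm:main} suffices and your intermediate pass through Remark~\ref{rem:du-hol} is not needed.
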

\begin{proof}
  According to \cite[Remark~5.3]{DieKap12} we get that $\bfD\bfu\in L^q(3B/2)$ for all $q>1$. Consequently by the Korn inequality and the  Sobolev embedding we get that $\bfu\otimes \bfu\in \camp^{1,n+\beta}(3B/2)$. Applying Theorem~\ref{thm:main} we get the result.
\end{proof}
Exactly as in Remark~\ref{rem:du-hol} it is possible to transfer the
H{\"o}lder continuity of~$\bfA(\bfD \bfu)$ to $\bfD \bfu$ and $\nabla \bfu$.

\begin{remark}
  A similar result has been proved also in~\cite{0978.35046}, provided
  $\kappa>0$, by a completely different method, which requires the
  stronger assumption $\divergence\bfG\in L^q(2B)$ for some $q>2$.
  
  The same result was also proved in \cite{wolf2007} for power law
  fluids with $p\in(3/2,2]$ and $\kappa\geq 0$, again under the
  stronger assumption $\divergence\bfG\in L^q(2B)$ for some $q>2$.
  
  By our method we reprove these known results and improve them by
  weakening the assumption on the data of the problem.
\end{remark}

\section{An Application to the parabolic Stokes problem}
\label{sec:NSpara}

Now we apply the previous results to the evolutionary variant of the
problem \eqref{eq:sysinhom}. We set $T>0$ and $I=(0,T)$,
$\Omega_T=\Omega\times I$ and assume that $\bfu\in
L^\infty(I,L^2(\Omega))$ with $\bfD\bfu\in L^\varphi(\Omega_T)$ is a
local weak solution of the problem
\begin{align}
  \begin{alignedat}{2}
    \label{eq:sysinhom-ev}
    \partial_t \bfu - \divergence(\bfA(\bfD \bfu)) + \nabla \pi&= \bff
    &&\quad\text{in $\Omega_T$,}
    \\
    \divergence \bfu&=0 &&\quad\text{in $\Omega_T$.}
  \end{alignedat}
\end{align}

If the system of equations \eqref{eq:sysinhom-ev} is complemented by a suitable boundary and initial condition and if the data of the problem are sufficiently smooth it is possible to show existence of  a solution that moreover satisfies 
\begin{align}\label{reg:time-der}
  \partial_t\bfu\in L^\infty(I,L^2(\Omega)),
\end{align}
see for example \cite{KMS-Nodea, K-ZAA, BEKP-MMAS}. If we know such regularity of $\partial_t\bfu$ and $\bff$ is smooth, it is easy to reconstruct the pressure $\pi$ in such a way that $\pi\in L^q(\Omega_T)$ with some $q>1$ and 
\begin{align}
  \label{eq:sysinhom-ev2}
  \forall\bfxi\in C^\infty_0(\Omega_T): \int_0^T
  \!\!-\skp{\partial_t\bfu}{\bfxi}+\skp{\bfA(\bfD
    \bfu)-\pi\bfI}{\nabla \bfxi \,}dt = \int_0^T\skp{\bff}{\bfxi}
  \,dt.
\end{align}
The constant $q$ is determined by the requirement $\bfA(\bfD\bfu)\in L^q(\Omega_T)$. 

Applying the results from the previous
sections of this article we obtain the next simple corollary.
\begin{corollary}\label{cor:51}
  Let $\bfA$ and $\varphi$ satisfy Assumption~\ref{ass:A}.
  Let $\bfu \in L^\infty(I,L^2(\Omega))$ with $\bfD\bfu\in
  L^\varphi(\Omega_T)$ and $\divergence \bfu=0$ in $\Omega_T$ solve
  the problem \eqref{eq:sysinhom-ev} and satisfy \eqref{reg:time-der}.
  Let $B$ be a ball with $2B\subset\Omega$ and
  $\bff\in L^\infty(I, L^2(\Omega))$. Then
  $\bfA(\bfD\bfu),\pi\in L^\infty(I,\setVMO(B))$.
\end{corollary}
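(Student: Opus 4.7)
The plan is to freeze time and reduce the parabolic $\setVMO$ claim to the stationary theory developed in the previous sections. Concretely, for almost every $t \in I$, I will view \eqref{eq:sysinhom-ev} as a stationary generalized Stokes system for $(\bfu(\cdot, t), \pi(\cdot, t))$ whose right-hand side is the modified forcing $\bff^*(\cdot, t) := \bff(\cdot, t) - \partial_t \bfu(\cdot, t)$. By hypothesis and by~\eqref{reg:time-der}, $\bff^* \in L^\infty(I, L^2(\Omega))$, which places the problem at each time slice precisely in the limiting case $s = 2$ of Remark~\ref{rem:rhsl2}.

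First, I would insert into~\eqref{eq:sysinhom-ev2} test functions that factor as $\eta(t)\bfxi(x)$, with $\eta \in C^\infty_0(I)$ and $\bfxi \in W^{1,\varphi}_0(\Omega)$, and invoke Lebesgue differentiation in $t$ to conclude that for a.e.\ $t$ the pair $(\bfu(\cdot, t), \pi(\cdot, t))$ is a local weak solution of the stationary Stokes problem~\eqref{eq:sysinhom} with forcing $\bff^*(\cdot, t)$ in the sense of~\eqref{eq:sysinhom-press}. Next, Remark~\ref{rem:rhsl2} with $s = 2$ would produce, at each such $t$, a symmetric $\bfG(\cdot, t) \in W^{1,2}(2B) \embedding \setVMO(2B)$ with $\bff^*(\cdot, t) = -\divergence \bfG(\cdot, t)$ and whose $\setVMO(2B)$-modulus is dominated by $\norm{\bff^*}_{L^\infty(I, L^2(\Omega))}$ uniformly in $t$. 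The $\setVMO$ conclusion announced in that same remark would then deliver, for a.e.\ $t \in I$,
\begin{align*}
  \norm{\bfA(\bfD\bfu(\cdot, t))}_{\setBMO(B)} + \norm{\pi(\cdot, t)}_{\setBMO(B)} \leq c\, M_{2B}^\sharp(\bfA(\bfD\bfu(\cdot, t))) + c\,\norm{\bff^*(\cdot, t)}_{L^2(2B)},
\end{align*}
with a $\setVMO$-modulus that is uniform in~$t$.

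The remaining (and main) obstacle is to bound $M_{2B}^\sharp(\bfA(\bfD\bfu(\cdot, t)))$ uniformly in $t$, since the standing hypotheses a priori place $\bfA(\bfD\bfu)$ only in $L^{\varphi^*}(\Omega_T)$ and not automatically in $L^\infty(I, L^1(2B))$. My plan to overcome this is a local-in-space energy estimate at the frozen time~$t$: test the stationary equation by a cut-off of $\bfu(\cdot, t) - \bfz$ for a suitable linear $\bfz$, corrected to be divergence-free via the \Bogovskii{} operator exactly as in the proof of Lemma~\ref{lem:VPrev}, and then combine $\bfu \in L^\infty(I, L^2(\Omega))$, the uniform $L^2$-bound on $\bff^*(\cdot, t)$, Lemma~\ref{lem:hammer}, and Young's inequality~\eqref{eq:young}. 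This should yield $\int_{2B} \phi(\abs{\bfD\bfu(\cdot, t)})\,dx \leq C$ uniformly in~$t$ and, by a further application of Lemma~\ref{lem:hammer}, the desired uniform $L^1(2B)$-bound on $\bfA(\bfD\bfu(\cdot, t))$. Combined with the estimate above this delivers $\bfA(\bfD\bfu), \pi \in L^\infty(I, \setVMO(B))$.
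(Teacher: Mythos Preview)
Your approach is essentially the same as the paper's: the paper's proof consists of the single sentence ``The result is immediate consequence of $\partial_t \bfu \in L^\infty(I, L^2(\Omega))$ and Remark~\ref{rem:rhsl2},'' i.e.\ exactly your idea of freezing time, moving $\partial_t\bfu$ to the right-hand side, and invoking the $s=2$ case of Remark~\ref{rem:rhsl2}. The additional point you raise --- the need for a uniform-in-$t$ bound on $M_{2B}^\sharp(\bfA(\bfD\bfu(\cdot,t)))$ via a local energy estimate --- is not spelled out in the paper, so in that respect your write-up is more careful than the original.
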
 
\begin{proof}
  The result is immediate consequence of $\partial_t \bfu \in
  L^\infty(I, L^2(\Omega))$ and Remark~\ref{rem:rhsl2}.
\end{proof}
\begin{remark}
  Certainly, we can obtain a similar result for the
  problem~\eqref{eq:sysinhom-ev} with convection, as soon as $\bfu
  \otimes \bfu \in L^\infty(I, \setVMO(\Omega))$. This follows for
  example from the fact that $\bfV(\bfD \bfu) \in W^{1,2}(I,
  L^2(\Omega)) \cap L^2(I, W^{1,2}(\Omega))$. Such kind of regularity
  is obtained, if it is possible to test with $\partial_t^2 \bfu$ and
  $\Delta \bfu$.
\end{remark} 

In \cite{KMS-Nodea} a method was developed to construct
regular solutions of \eqref{eq:sysinhom-ev}. The essential assumption
was that the growth of $\bfA$ is sufficiently fast. It was necessary
to assume that
\begin{align}\label{ass:conv2}
  \liminf_{s\to+\infty}\frac{\varphi(s)}{s^r}>0\quad\mbox{for some
    $r>\frac43$}.
\end{align}
This assumption was not due to the presence of the convective term in
the analysis of \cite{KMS-Nodea}. It was necessary to overcome
problems connected with the anisotropy of the evolutionary problem
\eqref{eq:sysinhom-ev}.  The previous corollary is a first step to
improve these results. If it is possible to show $\partial_t \bfu \in
L^\infty(I, L^s(\Omega))$ for some $s>2$. Then for $\bff \in
L^\infty(I, L^s(\Omega))$, we find by Remark~\ref{rem:rhsl2} that
$\bfA(\bfD \bfu) \in L^\infty(I, C^{0,\beta}(\Omega))$ for $\beta \in
(0,1-\frac 2s] \cap (0,\frac{2\gamma}{\pbar'})$. This implies
(locally) bounded gradients~$\nabla \bfu$. So far the results of this
paper are of local nature. An extension of this technique up to the
boundary would imply globally bounded gradients~$\nabla \bfu$ and we
could reconstruct the result of \cite{KMS-Nodea} for the generalized
Stokes problem without the restriction \eqref{ass:conv2}.

\bibliographystyle{amsplain}

\def\ocirc#1{\ifmmode\setbox0=\hbox{$#1$}\dimen0=\ht0 \advance\dimen0
  by1pt\rlap{\hbox to\wd0{\hss\raise\dimen0
  \hbox{\hskip.2em$\scriptscriptstyle\circ$}\hss}}#1\else {\accent"17 #1}\fi}
  \def\polhk#1{\setbox0=\hbox{#1}{\ooalign{\hidewidth
  \lower1.5ex\hbox{`}\hidewidth\crcr\unhbox0}}} \def\cprime{$'$}
  \def\ocirc#1{\ifmmode\setbox0=\hbox{$#1$}\dimen0=\ht0 \advance\dimen0
  by1pt\rlap{\hbox to\wd0{\hss\raise\dimen0
  \hbox{\hskip.2em$\scriptscriptstyle\circ$}\hss}}#1\else {\accent"17 #1}\fi}
  \def\ocirc#1{\ifmmode\setbox0=\hbox{$#1$}\dimen0=\ht0 \advance\dimen0
  by1pt\rlap{\hbox to\wd0{\hss\raise\dimen0
  \hbox{\hskip.2em$\scriptscriptstyle\circ$}\hss}}#1\else {\accent"17 #1}\fi}
  \def\ocirc#1{\ifmmode\setbox0=\hbox{$#1$}\dimen0=\ht0 \advance\dimen0
  by1pt\rlap{\hbox to\wd0{\hss\raise\dimen0
  \hbox{\hskip.2em$\scriptscriptstyle\circ$}\hss}}#1\else {\accent"17 #1}\fi}
  \def\ocirc#1{\ifmmode\setbox0=\hbox{$#1$}\dimen0=\ht0 \advance\dimen0
  by1pt\rlap{\hbox to\wd0{\hss\raise\dimen0
  \hbox{\hskip.2em$\scriptscriptstyle\circ$}\hss}}#1\else {\accent"17 #1}\fi}
  \def\cprime{$'$}
\providecommand{\bysame}{\leavevmode\hbox to3em{\hrulefill}\thinspace}
\providecommand{\MR}{\relax\ifhmode\unskip\space\fi MR }
\providecommand{\MRhref}[2]{%
  \href{http://www.ams.org/mathscinet-getitem?mr=#1}{#2}
}
\providecommand{\href}[2]{#2}


\begin{thebibliography}{10}

\bibitem{abf2005}
D.~Apushkinskaya, M.~Bildhauer, and M.~Fuchs, \emph{Steady states of
  anisotropic generalized {N}ewtonian fluids}, J. Math. Fluid Mech. \textbf{7}
  (2005), no.~2, 261--297. \MR{2177129 (2006g:35200)}

\bibitem{BelDieKre11}
L.~Belenki, L.~Diening, and Ch. Kreuzer, \emph{Optimality of an adaptive finite
  element method for the $p$-{L}aplacian equation}, IMA J. Numer. Anal.
  \textbf{32} (2012), no.~2, 484--510.

\bibitem{MR82b:35135}
M.~E. Bogovski{\u\i}, \emph{Solution of the first boundary value problem for an
  equation of continuity of an incompressible medium}, Dokl. Akad. Nauk SSSR
  \textbf{248} (1979), no.~5, 1037--1040.

\bibitem{MR82m:26014}
\bysame, \emph{Solutions of some problems of vector analysis, associated with
  the operators ${\rm div}$\ and ${\rm grad}$}, Theory of cubature formulas and
  the application of functional analysis to problems of mathematical physics
  (Russian), Akad. Nauk SSSR Sibirsk. Otdel. Inst. Mat., Novosibirsk, 1980,
  pp.~5--40, 149.

\bibitem{BreDie11}
D.~Breit and L.~Diening, \emph{Sharp conditions for {K}orn inequalities in
  {O}rlicz spaces}, Journal of Mathematical Fluids Mechanics \textbf{14}
  (2011), 565--573, 10.1007/s00021-011-0082-x.

\bibitem{BEKP-MMAS}
M.~Bul{{\'\i}}{\v{c}}ek, F.~Ettwein, P.~Kaplick{\'y}, and D.~Pra{\v{z}}{\'a}k,
  \emph{On uniqueness and time regularity of flows of power-law like
  non-{N}ewtonian fluids}, Math. Methods Appl. Sci. \textbf{33} (2010), no.~16,
  1995--2010. \MR{2744616 (2011i:35187)}

\bibitem{CiaMaz11}
Andrea Cianchi and Vladimir~G. Maz'ya, \emph{Global {L}ipschitz regularity for
  a class of quasilinear elliptic equations}, Comm. Partial Differential
  Equations \textbf{36} (2011), no.~1, 100--133.

\bibitem{MR2461247}
F.~Crispo and C.~R. Grisanti, \emph{On the existence, uniqueness and
  {$C^{1,\gamma}(\overline\Omega)\cap W^{2,2}(\Omega)$} regularity for a class
  of shear-thinning fluids}, J. Math. Fluid Mech. \textbf{10} (2008), no.~4,
  455--487. \MR{2461247 (2010c:35154)}

\bibitem{DiBMan93}
E.~DiBenedetto and J.~Manfredi, \emph{On the higher integrability of the
  gradient of weak solutions of certain degenerate elliptic systems}, Amer. J.
  Math. \textbf{115} (1993), no.~5, 1107--1134.

\bibitem{DieE08}
L.~Diening and F.~Ettwein, \emph{Fractional estimates for non-differentiable
  elliptic systems with general growth}, Forum Mathematicum \textbf{20} (2008),
  no.~3, 523--556.

\bibitem{DieKap12}
L.~Diening and P.~Kaplick\'y, \emph{${L}^q$ theory for a generalized {S}tokes
  system}, Manuscripta Mathematica (2012 online first).

\bibitem{bmophi}
L.~Diening, P.~Kaplick{\'y}, and S.~Schwarzacher, \emph{B{MO} estimates for the
  {$p$}-{L}aplacian}, Nonlinear Anal. \textbf{75} (2012), no.~2, 637--650.
  \MR{2847446 (2012i:35089)}

\bibitem{DieK08}
L.~Diening and Ch. Kreuzer, \emph{Linear convergence of an adaptive finite
  element method for the $p$-{L}aplacian equation}, SIAM J. Numer. Anal.
  \textbf{46} (2008), 614--638.

\bibitem{DieRuSchu2010}
L.~Diening, M.~R{\ocirc{u}}{\v{z}}i{\v{c}}ka, and K.~Schumacher, \emph{A
  decomposition technique for {J}ohn domains}, Ann. Acad. Sci. Fenn. Math.
  \textbf{35} (2010), no.~1, 87--114. \MR{2643399}

\bibitem{duoan}
J.~Duoandikoetxea, \emph{Fourier analysis}, Graduate Studies in Mathematics,
  vol.~29, American Mathematical Society, Providence, RI, 2001, Translated and
  revised from the 1995 Spanish original by David Cruz-Uribe. \MR{1800316
  (2001k:42001)}

\bibitem{DuzMin10Lip}
F.~Duzaar and G.~Mingione, \emph{Local {L}ipschitz regularity for degenerate
  elliptic systems}, Ann. Inst. H. Poincar{\'e} Anal. Non Lin{\'e}aire \textbf{27}
  (2010), no.~6, 1361--1396.

\bibitem{Iwa1982}
Tadeusz Iwaniec, \emph{On {$L\sp{p}$}-integrability in {PDE}s and quasiregular
  mappings for large exponents}, Ann. Acad. Sci. Fenn. Ser. A I Math.
  \textbf{7} (1982), no.~2, 301--322. \MR{MR686647 (84h:30026)}

\bibitem{Iwa1983}
\bysame, \emph{Projections onto gradient fields and {$L^{p}$}-estimates for
  degenerated elliptic operators}, Studia Math. \textbf{75} (1983), no.~3,
  293--312. \MR{722254 (85i:46037)}

\bibitem{K-ZAA}
P.~Kaplick{\'y}, \emph{Regularity of flows of a non-{N}ewtonian fluid subject
  to {D}irichlet boundary conditions}, Z. Anal. Anwendungen \textbf{24} (2005),
  no.~3, 467--486. \MR{2208035 (2006k:76004)}

\bibitem{0946.76006}
P.~Kaplick{\'y}, J.~M{\'a}lek, and J.~Star{\'a}, \emph{{Full regularity of weak
  solutions to a class of nonlinear fluids in two dimensions---stationary,
  periodic problem.}}, Commentat. Math. Univ. Carol. \textbf{38} (1997), no.~4,
  681--695.

\bibitem{0978.35046}
\bysame, \emph{{$C\sp {1,\alpha}$}-solutions to a class of nonlinear fluids in
  two dimensions---stationary {D}irichlet problem}, Zap. Nauchn. Sem.
  S.-Peterburg. Otdel. Mat. Inst. Steklov. (POMI) \textbf{259} (1999),
  no.~Kraev. Zadachi Mat. Fiz. i Smezh. Vopr. Teor. Funkts. 30, 89--121, 297.

\bibitem{MR1727451}
\bysame, \emph{On global existence of smooth two-dimensional steady flows for a
  class of non-{N}ewtonian fluids under various boundary conditions}, Applied
  nonlinear analysis, Kluwer/Plenum, New York, 1999, pp.~213--229. \MR{1727451
  (2000i:76004)}

\bibitem{KMS-Nodea}
\bysame, \emph{Global-in-time {H}{\"o}lder continuity of the velocity gradients
  for fluids with shear-dependent viscosities}, NoDEA Nonlinear Differential
  Equations Appl. \textbf{9} (2002), no.~2, 175--195. \MR{1905824
  (2003i:35233)}

\bibitem{MR0226907}
O.A. Ladyzhenskaya, \emph{New equations for the description of the motions of
  viscous incompressible fluids, and global solvability for their boundary
  value problems}, Trudy Mat. Inst. Steklov. \textbf{102} (1967), 85--104.
  \MR{0226907 (37 \#2493)}

\bibitem{0184.52603}
\bysame, \emph{{The mathematical theory of viscous incompressible flow}}, {New
  York - London - Paris: Gordon and Breach Science Publishers. XVIII, 224 p. },
  1969 (English).

\bibitem{0189.40603}
J.L. Lions, \emph{{Quelques m{\'e}thodes de r{\'e}solution des probl{\`e}mes aux limites
  non lin{\'e}aires}}, {Etudes mathematiques. Paris: Dunod; Paris:
  Gauthier-Villars. XX, 554 p. }, 1969 (French).

\bibitem{MR2182831}
J.~M{\'a}lek and K.~R. Rajagopal, \emph{Mathematical issues concerning the
  {N}avier-{S}tokes equations and some of its generalizations}, Evolutionary
  equations. {V}ol. {II}, Handb. Differ. Equ., Elsevier/North-Holland,
  Amsterdam, 2005, pp.~371--459. \MR{2182831 (2006k:35221)}

\bibitem{MR1268237}
K.~R. Rajagopal, \emph{Mechanics of non-{N}ewtonian fluids}, Recent
  developments in theoretical fluid mechanics ({P}aseky, 1992), Pitman Res.
  Notes Math. Ser., vol. 291, Longman Sci. Tech., Harlow, 1993, pp.~129--162.
  \MR{1268237 (95c:76005)}

\bibitem{RaoRen1991}
M.~M. Rao and Z.~D. Ren, \emph{Theory of {O}rlicz spaces}, Monographs and
  Textbooks in Pure and Applied Mathematics, vol. 146, Marcel Dekker Inc., New
  York, 1991.

\bibitem{shil2000}
T.~N. Shilkin, \emph{Estimate for the second-order derivatives of solutions to
  boundary-value problems of the theory of non-{N}ewtonian liquids}, J. Math.
  Sci. (New York) \textbf{98} (2000), no.~6, 781--797, Problems of mathematical
  physics and function theory. \MR{1788222 (2001i:76003)}

\bibitem{Spa65}
S.~Spanne, \emph{Some function spaces defined using the mean oscillation over
  cubes}, Ann. Scuola Norm. Sup. Pisa (3) \textbf{19} (1965), 593--608.

\bibitem{wolf2007}
J.~Wolf, \emph{Interior {$C^{1,\alpha}$}-regularity of weak solutions to the
  equations of stationary motions of certain non-{N}ewtonian fluids in two
  dimensions}, Boll. Unione Mat. Ital. Sez. B Artic. Ric. Mat. (8) \textbf{10}
  (2007), no.~2, 317--340. \MR{MR2339444 (2008f:35320)}

\end{thebibliography}

\end{document}